\theoremstyle{remark}  
\newtheorem{theorem}{Theorem}[section]
\newtheorem{lemma}[theorem]{Lemma}
\newtheorem{definition}[theorem]{Definition}
\newtheorem{example}[theorem]{Example}
\newtheorem{Con}[theorem]{Conjecture}
\newcommand{\eig}{\text{eig}} 
\newcommand{\R}{\mathbb{R}}  
\title{Shuffling via sums of Jucys--Murphy Elements}
\author{
Samira Arfaee \thanks{Stony Brook University, NY 11794. E-mail: \texttt{seyedehfatemeh.arfaeezarandi@stonybrook.edu}. Supported by the NSF grant DMS-2450510.} 
\and 
Evita Nestoridi \thanks{Stony Brook University, NY 11794. E-mail: \texttt{evrydiki.nestoridi@stonybrook.edu }. Supported by the Simons Foundation Travel Support for Mathematicians -- MPSTSM00007955 and the NSF grant DMS-2450510.}
}
\date{}
\begin{document}

\maketitle  

\begin{abstract}

We consider a family of card shuffles of $n$ cards in which the allowed moves involve transpositions corresponding to the Jucys--Murphy elements of the symmetric group $\{S_m\}_{m \leq n}$. We determine the eigenvalues of the corresponding   $n! \times n!$ transition matrices of these shuffles and study the mixing times for a special case, the $k$--star transpositions shuffle, a natural interpolation between the random transpositions
\cite{Diaconis1981} and the star transpositions \cite{PFlour}. We prove that the $k$--star transpositions shuffle exhibits total variation cutoff at $\frac{2n-(k+1)}{2(n-1)}n\log n$ with a window of $\frac{2n-(k+1)}{2(n-1)}n$. Furthermore, in the regimes $k/n \rightarrow 0$ or $1$, this shuffle has the same limit profile as random transpositions, which has been fully determined by Teyssier \cite{Teyssier2019}.

\end{abstract}


\section{Introduction}

Shuffling a deck of $n$ cards via transpositions has been widely studied \cite{10.1214/20-AAP1632, BD, AC, PFlour, Diaconis1981, ghosh2019total, JJ, Lacoin, NN, Teyssier2019}. In their seminal work, Diaconis and Shahshahani \cite{Diaconis1981} proved that it takes $\frac{1}{2} n \log n$ steps to shuffle a deck of $n$ cards by random transpositions. Diaconis \cite{PFlour} also proved that shuffling via the star transpositions takes $n \log n$ steps. Both works rely on diagonalizing the corresponding transition matrices using the representation theory of the symmetric group \cite{Diaconis1981, FOW}. In this paper, we diagonalize and study the mixing properties of different families of transposition shuffles that interpolate between the star transpositions and the random transpositions shuffle.  Although our diagonalization is motivated by the general framework of \cite{ ,axelrod2024spectrum,10.1214/20-AAP1632, Dieker2018, OMR}, the special role of the Jucys--Murphy elements in our family
of shuffles permits a more elementary and direct proof, relying on the
properties established in \cite{Mathas, MURPHY1992492}.

Let $2 \leq j \leq n$, and let $S_j$ be the symmetric group on
$[j] := \{1,\ldots,j\}$. Let
$
T_j := \{(i,j) \mid 1 \leq i < j\}.
$
Then, the $j$-th Jucys--Murphy element is
$
J_j := \sum_{\tau \in T_j} \tau
$.


Consider $E \subset [n]$ such that $n \in E$. The corresponding set of transpositions is defined as
$ T_E= \cup_{i \in E} T_i $
and the transition matrix is
\begin{equation}\label{transition}
P_E(x,  \sigma x)= \begin{cases}
\frac{1}{n}, & \sigma=\textup{id},\\
\frac{n-1}{n} \frac{1}{|T_E|}, & \textup{ if } \sigma \in T_E,\\
0, & \textup{otherwise,}
\end{cases}
\end{equation}
where $x, \sigma \in S_n$. We can also write this as an element of the group algebra $\mathbb{C}[S_n]$, as a linear sum of Jucys--Murphy elements \begin{equation}\label{TranJucy}
P_{E}=\frac{1}{n}I+\frac{n-1}{n|T_E|}\sum_{j \in E}J_j.
\end{equation}
The eigenvalues of $P_E$ are indexed by the set of standard Young tableaux of $n$. Let $\lambda= (\lambda_1, \lambda_2, \ldots)$ be a partition of $n$ and write $\lambda \vdash n$. Let $SYT(\lambda)$ be the set of standard Young tableaux of shape $\lambda$ and let $d_{\lambda}= |SYT(\lambda)|$.

\begin{theorem}\label{T-EVRRT}
Let $S \in SYT(\lambda)$ and $S(i,j)$ denote the number in box $(i,j)$ of $S$. The eigenvalue of $P_E$ corresponding to $S$ is
\begin{equation}\label{eig
}
\eig(S)= \frac{1}{n}+\frac{(n-1)}{n|T_E|}\sum_{S(i,j)\in E}  (j-i).
\end{equation}

\end{theorem}

Let $k \in [n]$ and let $E=\{n-k+1, \ldots, n\}$. This is the case of the $k$--star transpositions card shuffle, where the eigenvalues take a simpler expression in terms of the diagonal index of a partition. More precisely, we think of a partition $\lambda$ of $n$ as a diagram with $\lambda_i$ boxes on the $i$--th row. Under this convention, $(i,j)$  corresponds to the box in row $i$ and column $j$ of $\lambda$. The diagonal index of $\lambda$ is then defined as
$
\textup{Diag}(\lambda)  = \sum_{(i,j) \in \lambda} (j - i).
$

\begin{theorem}\label{T-EVRST}
Let $\lambda \vdash n$ and let $\mu \vdash n-k$, such that $\mu \subseteq \lambda$. The eigenvalue of the $k$--star shuffle corresponding to the pair $(\lambda, \mu)$ is 
\begin{equation} \label{formula}  
\eig(\lambda,\mu)= \frac{1}{n} + \frac{2(n-1)}{nk(2n-(k+1))} \bigg(\textup{Diag}(\lambda)-\textup{Diag}(\mu)\bigg),
\end{equation}
 with multiplicity $d_{\lambda}d_{\mu}d_{\lambda
/ \mu }$.
\end{theorem}

Theorems \ref{T-EVRRT} and \ref{T-EVRST} are proved by combining ideas from \cite{Mathas, MURPHY1992492}. One could also prove them using the lifting eigenvectors technique for analyzing shuffles, which was introduced by Dieker and Saliola in \cite{Dieker2018} to study the random-to-random shuffle (whose mixing behavior was studied in \cite{bernstein2019cutoff} and \cite{Subag}). This technique has now been applied in different setups \cite{axelrod2024spectrum, 10.1214/20-AAP1632, BCGS, EZLT,  GL, NP}. The first ones to consider applying this technique for a set of transpositions were Bate, Connor, and Matheau--Raven in \cite{10.1214/20-AAP1632}, when studying the cutoff for the one--sided transposition shuffle. The properties of the Jucys--Murphy elements discussed in \cite{Mathas, MURPHY1992492} allow us to implement simpler arguments. 

We study the mixing time and the limit profile of the $k$--star transpositions through their spectrum. Let $P_k$ denote the transition matrix of the $k$--star transpositions and let
$$\Vert P_{k}^t(x, \cdot)-U \Vert _{\textup{T.V.}}:= \frac{1}{2} \sum_{y \in S_n} \bigg \vert P_{k}^t(x, y) - \frac{1}{n!} \bigg \vert $$
be the total variation distance starting at $x \in S_n$. The total variation distance is defined as $$d(t)= \max_{x \in S_n} \Vert  P_{k}^t(x, \cdot)-U\Vert _{\textup{T.V.}}. $$
\begin{theorem}\label{T-Cutoff}
Let $ k \in [n]$ and set $t_{n,k}(c)=\frac{2n-(k+1)}{2(n-1)}n(\log n+c)$. For the $k$--star transpositions, we have that 
$$\lim_{c \rightarrow \infty} \lim_{ n \rightarrow \infty} d(t_{n,k}(c)) =0 \mbox{ and }\lim_{c \rightarrow - \infty} \lim_{ n \rightarrow \infty} d(t_{n,k}(c)) =1.$$
\end{theorem}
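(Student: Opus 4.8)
Here is the plan.

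Since $P_k$ is the transition matrix of a random walk on the group $S_n$ driven by a probability measure $Q$ supported on $\{\textup{id}\}\cup T_A$ with $A=\{n-k+1,\dots,n\}$, and $Q$ is invariant under inversion (every element of $T_A$ is a transposition, hence self‑inverse), the chain is reversible with respect to $U$, and by translation invariance $d(t)=\|P_k^{t}(\textup{id},\cdot)-U\|_{\textup{T.V.}}$; since $d$ is non‑increasing we may take $t=\lceil t_{n,k}(c)\rceil\in\N$. I will prove the upper bound with the $\ell^2$ (Diaconis--Shahshahani) bound and the lower bound with a distinguishing statistic built from the number of fixed points. For a reversible group walk, that bound together with Theorem~\ref{T-EVRST} reads
\begin{equation*}
4\,\|P_k^{t}(\textup{id},\cdot)-U\|_{\textup{T.V.}}^{2}\ \le\ \sum_{\substack{\lambda\vdash n\\ \lambda\neq(n)}} d_\lambda\sum_{\substack{\mu\vdash n-k\\ \mu\subseteq\lambda}} d_\mu\,d_{\lambda\setminus\mu}\;\eig(\lambda,\mu)^{2t},
\end{equation*}
because $\widehat{Q}(\rho_\lambda)$ is real symmetric with spectrum $\{\eig(\lambda,\mu):\mu\subseteq\lambda,\ \mu\vdash n-k\}$, the value $\eig(\lambda,\mu)$ occurring with multiplicity $d_\mu\,d_{\lambda\setminus\mu}$. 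The first step is to identify $\beta_\star:=\max_{\lambda\neq(n)}\max_{\mu}\eig(\lambda,\mu)$. As $\eig(\lambda,\mu)-\tfrac1n$ is a fixed positive multiple of $\textup{Diag}(\lambda)-\textup{Diag}(\mu)=\sum_{(i,j)\in\lambda/\mu}(j-i)$, the sum of the contents of the $k$ cells of $\lambda/\mu$, this maximum is attained at $\lambda=(n-1,1)$ with $\mu$ chosen so that $\lambda/\mu$ collects the $k$ cells of largest content, i.e.\ $\mu=(n-k-1,1)$ (with the degenerations $\mu=(1)$ when $k=n-1$ and $\mu=\varnothing$ when $k=n$); a short computation gives $\beta_\star=1-\frac{2(n-1)}{n(2n-(k+1))}$, hence $t\log(1/\beta_\star)=\log n+c+o(1)$ exactly when $t=t_{n,k}(c)$. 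This is the source of the cutoff location $t_{n,k}$ and of the stated window $\frac{2n-(k+1)}{2(n-1)}n$.

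For the upper bound I must show the right‑hand side above is $o_c(1)$ at $t=t_{n,k}(c)$. The conjugation symmetry $\textup{Diag}(\lambda')=-\textup{Diag}(\lambda)$, $\eig(\lambda',\mu')=\tfrac2n-\eig(\lambda,\mu)$, $d_{\lambda'}=d_\lambda$, $d_{\lambda'\setminus\mu'}=d_{\lambda\setminus\mu}$ puts the eigenvalues near $-1$ in multiplicity‑preserving bijection with those near $+1$, so it suffices to bound the terms with $\lambda_1\ge n/2$ and double. Writing $r=n-\lambda_1$ for the defect and $\bar\lambda\vdash r$ for the partition obtained by deleting the first row of $\lambda$, the steps are: (i) bound $\textup{Diag}(\lambda)-\textup{Diag}(\mu)$ from above in terms of $r$ and $k$ --- the largest content sum of $k$ cells of $\lambda$ strictly decreases as $r$ grows, improving $\eig(\lambda,\mu)\le 1$ to $\eig(\lambda,\mu)\le 1-(1-\beta_\star)(1+c_0 r)$ for some constant $c_0>0$; (ii) bound the three dimension factors, using $d_\lambda\le\binom{n}{r}d_{\bar\lambda}$, the crude $d_\mu\le d_\lambda$, and a bound on the number $d_{\lambda\setminus\mu}$ of standard Young tableaux of the skew shape $\lambda/\mu$; (iii) insert $t=t_{n,k}(c)$, so $2t(1-\beta_\star)=2\log n+2c+o(1)$, and sum, the series collapsing to a geometric series in $r$ (summed over the at most $r!$ choices of $\bar\lambda$) with total $O(e^{-2c})$ uniformly in $1\le k\le n$. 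The $r=1$ term alone, $\lambda=(n-1,1)$, contributes $d_{(n-1,1)}\sum_{\mu}d_\mu d_{\lambda\setminus\mu}\,\eig((n-1,1),\mu)^{2t}\asymp n\cdot n\cdot\beta_\star^{2t}\asymp e^{-2c}$, already of the right size; the content of (i)--(iii) is that no other term is larger.

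For the lower bound, fix $t=t_{n,k}(-c)$ and take $W(\sigma)=\textup{fix}(\sigma)-1=\chi_{(n-1,1)}(\sigma)$, the character of the standard representation. By Theorem~\ref{T-EVRST} and $\mathbb{E}_{\textup{id}}[\chi_\rho(X_t)]=\textup{Tr}(\widehat{Q}(\rho)^{t})$,
\begin{equation*}
m:=\mathbb{E}_{\textup{id}}[W(X_t)]=\sum_{\substack{\mu\vdash n-k\\ \mu\subseteq(n-1,1)}} d_\mu\,d_{(n-1,1)\setminus\mu}\;\eig\big((n-1,1),\mu\big)^{t}\ \geq\ 0,
\end{equation*}
a sum of nonnegative terms with total coefficient $d_{(n-1,1)}=n-1$; a direct estimate with Theorem~\ref{T-EVRST} (the cases $k$ small and $k$ close to $n$ requiring separate treatment of which $\mu$ dominates) shows the leading term is of order $n\beta_\star^{t}$, whence $m\asymp e^{c}\to\infty$ as $c\to\infty$. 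For the variance expand $W^{2}=\textup{fix}_2-\textup{fix}+1$ with $\textup{fix}_2$ the number of ordered pairs of distinct fixed points, equivalently $\chi_{(n-1,1)}^{2}=1+\chi_{(n-1,1)}+\chi_{(n-2,2)}+\chi_{(n-2,1,1)}$, so that $\textup{Var}_{\textup{id}}(W(X_t))=1+m+\mathbb{E}_{\textup{id}}[\chi_{(n-2,2)}(X_t)]+\mathbb{E}_{\textup{id}}[\chi_{(n-2,1,1)}(X_t)]-m^{2}$; evaluating the last two expectations with Theorem~\ref{T-EVRST} and checking that their $\beta_\star^{2t}$‑order contributions cancel against $m^{2}$ --- the algebraic shadow of the asymptotic Poisson behaviour of the number of fixed points under random transpositions --- gives $\textup{Var}_{\textup{id}}(W(X_t))=o(m^{2})$. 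Chebyshev's inequality then yields $\mathbb{P}_{\textup{id}}(W(X_t)\le m/2)=o(1)$, while $\mathbb{E}_U[\textup{fix}]=1$ gives $\mathbb{P}_U(W\ge m/2)\le 2/(m+2)$, so $d(t_{n,k}(-c))\ge 1-o(1)-2/(m+2)\to1$ as $c\to\infty$.

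The routine parts are the identification of $\beta_\star$ and the Fourier/second‑moment bookkeeping. The main obstacle is the bulk estimate (i)--(iii) in the upper bound: one must control $\sum_{\lambda,\mu}d_\lambda d_\mu d_{\lambda\setminus\mu}\eig(\lambda,\mu)^{2t}$ for every $1\le k\le n$ simultaneously, which requires an upper bound on the content sum $\textup{Diag}(\lambda)-\textup{Diag}(\mu)$ sharp enough that its exponential gain over $\beta_\star^{2t}$ defeats all three dimension factors at once --- in particular the count $d_{\lambda\setminus\mu}$ of standard skew tableaux, which has no analogue in the Diaconis--Shahshahani treatment of random transpositions and is entangled with both the content sum and the defect of $\lambda$. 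A secondary delicate point is the variance cancellation in the lower bound, which must be verified uniformly down to $k$ of order $n$, where several sub‑eigenvalues of $\widehat{Q}(\rho_{(n-1,1)})$ become comparable to $\beta_\star$.
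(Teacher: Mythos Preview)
Your overall architecture is the paper's: the $\ell^{2}$ bound for the upper bound and the second-moment method on $\chi_{(n-1,1)}$ for the lower bound. Your lower bound is essentially the paper's Section~\ref{lower} computation, including the decomposition $\chi_{(n-1,1)}^{2}=1+\chi_{(n-1,1)}+\chi_{(n-2,2)}+\chi_{(n-2,1,1)}$ and the cancellation you anticipate; the paper computes each trace explicitly (Lemma~\ref{L-low}) and finds $\textup{Var}_{P_{\textup{id}}^{t}}(\chi_{(n-1,1)})\to 1+e^{-c}$. Your identification of $\beta_{\star}=1-\tfrac{2(n-1)}{n(2n-(k+1))}$ is also correct.

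For the upper bound, however, you have inverted where the difficulty lies. What you flag as ``the main obstacle'' --- the skew dimension $d_{\lambda\setminus\mu}$ having ``no analogue in the Diaconis--Shahshahani treatment'' --- evaporates in one line via the branching identity $\sum_{\mu\subseteq\lambda,\,\mu\vdash n-k}d_{\mu}d_{\lambda\setminus\mu}=d_{\lambda}$. This gives $d_{\mu}d_{\lambda\setminus\mu}\le d_{\lambda}$ termwise, hence
\[
\sum_{\mu}d_{\lambda}d_{\mu}d_{\lambda\setminus\mu}\,\eig(\lambda,\mu)^{2t}\le d_{\lambda}^{2}\,\max_{\mu}\eig(\lambda,\mu)^{2t},
\]
which is exactly the Diaconis--Shahshahani form; the paper's Section~\ref{upper} then runs the classical zone argument nearly verbatim. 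Your proposed detour through $d_{\mu}\le d_{\lambda}$ plus a separate bound on $d_{\lambda\setminus\mu}$ loses a factor of order $d_{\lambda}$: already for $\lambda_{1}=n-r$ with $r$ fixed and $k$ small this makes the contribution of order $n^{r}$ rather than $O(1)$, so the sketch as written does not close.

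Conversely, your step~(i) --- effectively the bound $\max_{\mu}\eig(\lambda,\mu)\le\lambda_{1}/n$, or in your language $\eig(\lambda,\mu)\le 1-(1-\beta_{\star})(1+c_{0}r)$ --- is where the real work lies, and you assert it without argument. In the paper this occupies all of Section~\ref{boundeigen}: one first shows (Lemma~\ref{lem:eig-bounds}) that the maximum over $\mu\subseteq\lambda$ is achieved by the column-insertion tableau $T_{\lambda^{\downarrow}}$, and then bounds the resulting shifted $k$-diagonal index $A_{\lambda^{\downarrow}}^{k}$ from below by a non-trivial box-moving argument on the skew shape (Lemma~\ref{MINAK}), finally obtaining $\eig(S)\le\lambda_{1}/n$ uniformly in $k$ (Lemma~\ref{UPbo}). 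Your heuristic ``the largest content sum of $k$ cells of $\lambda$ strictly decreases as $r$ grows'' is the correct statement, but it is not self-evident uniformly in $1\le k\le n$.
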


In other words, the $k$--star transpositions shuffle exhibits a total variation cutoff at 
$\frac{2n-(k+1)}{2(n-1)}n\log n$ with window $\frac{2n-(k+1)}{2(n-1)}n$.

We also study the limit profile of the $k$--star transpositions, defined as
$$\Phi_{k}(c):= \lim_{ n \rightarrow \infty} d(t_{n,k}(c)), $$
when this limit exists.
Teyssier \cite{Teyssier2019} derived an explicit formulation for the limit profile of random transpositions, which corresponds to $k=n$. The star transpositions shuffle ($k=1$) has the same limit profile as the random transpositions shuffle, as shown in \cite{nestoridi2024comparing}. In general, we can consider $k$ as a function of $n$. Let $k=f(n)$; then we can study $\Phi_{k=f(n)}(c)$.  
We use the comparison method introduced in \cite{nestoridi2024comparing} to extend this result.

\begin{theorem}\label{T-LP}
Let $k$ be such that $\lim_{n \rightarrow \infty} \frac{k}{n}=0$ or $1$. For the $k$--star transpositions card shuffle at time $t_{n,k}(c)$, we have
$$\Phi_{k}(c)=d_{\textup{T.V.}}(\textup{Poiss}(1+e^{-c}),  \textup{Poiss}(1)),$$
for all $c \in \R$.

\end{theorem}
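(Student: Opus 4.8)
The plan is to use the comparison method of \cite{nestoridi2024comparing} to transfer the limit profile from random transpositions to $k$-star transpositions in the regimes $k/n \to 0$ and $k/n \to 1$. Let me set up the key ingredients. For random transpositions, Teyssier \cite{Teyssier2019} established that the distance at time $\frac12 n(\log n + c)$ converges to $d_{\textup{T.V.}}(\textup{Poiss}(1+e^{-c}), \textup{Poiss}(1))$, and the proof isolates the dominant contribution to the $\ell^2$ (or Fourier) bound coming from the representations near the trivial one — concretely, the partitions $\lambda = (n-\ell, 1^\ell)$ (hooks) and $\lambda = (n-\ell, \ell)$ type shapes with $\ell$ bounded, with $(n-1,1)$ being the leading term. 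The strategy is to show that for $k$-star transpositions at the rescaled time $t_{n,k}(c)$, exactly the same representations dominate and contribute exactly the same amount in the limit, while everything else is negligible.

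First I would recall the diagonalization. By Theorem \ref{T-EVRST}, the eigenvalue attached to a pair $(\lambda,\mu)$ with $\mu\subseteq\lambda$, $|\mu|=n-k$, is
\begin{equation*}
\eig(\lambda,\mu)= \frac{1}{n} + \frac{2(n-1)}{nk(2n-(k+1))}\bigl(\textup{Diag}(\lambda)-\textup{Diag}(\mu)\bigr),
\end{equation*}
with multiplicity $d_\lambda d_\mu d_{\lambda\setminus\mu}$. Writing $c_{n,k} := \frac{2n-(k+1)}{2(n-1)}$ for the time-scaling factor, the upper bound via the $\ell^2$ method (Diaconis--Shahshahani style, valid since the walk is a conjugacy-invariant — actually it is only invariant under $S_{n-k}$ acting on the fixed coordinates, so one should use the general upper bound $4\|P_k^t(\mathrm{id},\cdot)-U\|^2_{\textup{T.V.}} \le \sum_{\lambda\neq (n)} d_\lambda \sum_{\mu} \, \eig(\lambda,\mu)^{2t} \, m(\lambda,\mu)$ with the appropriate multiplicities, or better, adapt the exact Fourier expansion used in \cite{nestoridi2024comparing}). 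The point is that at $t = t_{n,k}(c) = c_{n,k}\, n(\log n + c)$ one has $\eig(\lambda,\mu)^t = \bigl(1 + \tfrac{1}{c_{n,k} n}(\textup{Diag}(\lambda)-\textup{Diag}(\mu)) + O(1/n)\bigr)^{c_{n,k} n(\log n+c)}$, which to leading order is $n^{-(\textup{Diag}(\lambda)-\textup{Diag}(\mu))} e^{-c(\textup{Diag}(\lambda)-\textup{Diag}(\mu))}$ — the normalization was precisely chosen so the $\textup{Diag}(\lambda)-\textup{Diag}(\mu) = 1$ term behaves like $n^{-1}e^{-c}$, matching random transpositions.

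Next I would organize the sum by the "defect" $r := \textup{Diag}(\lambda) - \textup{Diag}(\mu)$ (for the transpose pair this becomes $-r$; one keeps both and the symmetric contributions combine). For the contribution from pairs with $\lambda$ close to $(n)$: in both limiting regimes, a lower-order analysis of $\textup{Diag}(\lambda)$ forces $\lambda$ to be a near-trivial shape (a hook $(n-\ell,1^\ell)$ or near-row shape) and then $\mu \subseteq \lambda$ with $|\mu| = n-k$ is essentially determined, with $d_{\lambda\setminus\mu}$ and $d_\mu$ under control. When $k/n \to 0$, $\mu$ fills up almost all of $\lambda$, so $\textup{Diag}(\lambda)-\textup{Diag}(\mu)$ counts boxes removed near the corner and the leading pair is $\lambda=(n-1,1)$, $\mu = (n-k-1,1)$ or $(n-k)$, giving defect $1$ with the right multiplicity $\sim n$; when $k/n\to 1$, $\mu$ is tiny (size $n-k = o(n)$), $\textup{Diag}(\mu)$ is lower order, and the leading term is again $\lambda = (n-1,1)$ with defect $\textup{Diag}(\lambda) = 1$. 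In each case I would show the leading-order contribution to $4d(t_{n,k}(c))^2$ — or rather to the appropriate $\chi^2$-type functional that \cite{nestoridi2024comparing} shows converges to $d_{\textup{T.V.}}(\textup{Poiss}(1+e^{-c}),\textup{Poiss}(1))$ — matches that of random transpositions term by term, and bound the remainder (all pairs with $\lambda$ not near-trivial, or with $r\ge 2$ after summing) by a quantity going to $0$ as $n\to\infty$, uniformly, then letting the comparison machinery conclude. The matching lower bound uses the standard test function (number of fixed points, or a linear combination tuned to the $(n-1,1)$ representation) exactly as in \cite{Teyssier2019, nestoridi2024comparing}.

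The main obstacle is controlling the two-parameter sum over pairs $(\lambda,\mu)$ — in particular bounding $\sum_{\mu} d_\mu d_{\lambda\setminus\mu} \, \eig(\lambda,\mu)^{2t}$ uniformly for each $\lambda$, since unlike random transpositions the eigenvalue depends on the skew shape $\lambda/\mu$ and not just on $\lambda$. The key estimates will be: (i) $d_{\lambda\setminus\mu} \le \binom{k}{\text{something}}$ combined with $d_\mu \le d_\lambda$-type bounds controlling the multiplicity blow-up; (ii) showing that for fixed $\lambda$ the defect $\textup{Diag}(\lambda)-\textup{Diag}(\mu)$ is bounded below by a quantity growing with how far $\mu$ sits from the "greedy" sub-diagram, so that $\eig(\lambda,\mu)^{2t}$ decays fast enough to kill the multiplicity; and (iii) verifying that the total contribution of $\lambda$ with $|\lambda \setminus (n)|$-type complexity $\ge 2$ is $o(1)$, which is where the two regimes $k/n\to 0$ and $k/n \to 1$ are used asymmetrically and must be handled separately. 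Once these bounds are in place, the convergence to the Poisson profile follows by the same argument structure as in \cite{nestoridi2024comparing}, so I would invoke their comparison theorem rather than reprove it.
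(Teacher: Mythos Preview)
Your overall plan --- apply Lemma~\ref{comp} to compare $k$-star with random transpositions --- is the paper's strategy, but the execution has gaps. First, a conceptual point: Lemma~\ref{comp} bounds $|\Phi_k(c)-\Phi(c)|$ by the $\ell^2$ sum $\bigl(\sum d_\lambda d_\mu d_{\lambda\setminus\mu}(s_\lambda^{t_{n,n}}-s_{(\lambda,\mu)}^{t_{n,k}})^2\bigr)^{1/2}$; once this vanishes you obtain $\Phi_k=\Phi$ directly, with the Poisson expression inherited from Teyssier. There is no separate lower-bound step via fixed points, and no ``$\chi^2$-type functional converging to the Poisson TV distance''. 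Second, and more substantively, for $k/n\to 0$ the crux is controlling the $\mu$-sum when $\lambda_1>n-M$. The paper does \emph{not} use refined eigenvalue decay for non-greedy $\mu$; the only eigenvalue bound used there is $|s_{(\lambda,\mu)}|\le \lambda_1/n$ from Lemma~\ref{UPbo}, which is independent of $\mu$. Instead the paper introduces the parameter $l:=k-(\lambda_1-\mu_1)$, the number of boxes removed from rows $i>1$, and proves the multiplicity estimate $d_\mu d_{\lambda\setminus\mu}\le (4^jk/n)^l\, d_\lambda$ (Lemma~\ref{L-SK}). Since $k/n\to 0$, the factor $(4^jk/n)^l$ kills every $l\ge 1$ contribution; the unique $l=0$ term (all $k$ boxes stripped from row~1) has $s_{(\lambda,\mu)}=1-\tfrac{n-1}{n-(k+1)/2}\tfrac{j}{n}$ exactly, so $s_{(\lambda,\mu)}^{t_{n,k}}=\tfrac{e^{-cj}}{n^j}(1+O(j^2/n))$ matches $s_\lambda^{t_{n,n}}$. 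Your proposed alternative (defect lower bounds forcing eigenvalue decay) may be salvageable but would require a new $\mu$-dependent eigenvalue estimate you have not supplied; note also that your heuristic ``the $\textup{Diag}(\lambda)-\textup{Diag}(\mu)=1$ term behaves like $n^{-1}e^{-c}$'' is off, since this difference is a sum of $k$ contents and is typically of order $kn$, not $1$.

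For $k/n\to 1$ your sketch contains an error (the claim ``defect $\textup{Diag}(\lambda)=1$'' for $\lambda=(n-1,1)$; in fact $\textup{Diag}((n-1,1))=\binom{n-1}{2}-1$), and the paper's argument is simpler than you suggest. Because $|\mu|=n-k=o(n)$, one checks directly that for $\lambda_1>n-M$ the eigenvalue satisfies $s_{(\lambda,\mu)}=1-\tfrac{n-1}{n-(k+1)/2}\tfrac{j}{n}+O(1/n^2)$ \emph{uniformly in $\mu$}; hence $|s_\lambda^{t_{n,n}}-s_{(\lambda,\mu)}^{t_{n,k}}|=\tfrac{e^{-cj}}{n^j}\,O(\log n/n)$ and the full comparison sum is $O(\log^2 n/n^2)$ without any splitting over $\mu$.
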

The restriction on $k$ comes from the variation in the multiplicities of the eigenvalues. The comparison technique fails to give the desired result for any $k$, but we still conjecture that the above limit behavior holds for general $k$.
\begin{Con}
For the $k$--star transpositions card shuffle, if $\lim_{n \rightarrow \infty} \frac{k}{n}=\alpha<1$, then we have
$$\Phi_{k}(c)=d_{\textup{T.V.}}(\textup{Poiss}(1+(1-\alpha) e^{-c}),  \textup{Poiss}(1)),$$
for all $c \in \R$.
\end{Con}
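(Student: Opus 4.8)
The natural plan is to follow Teyssier's \cite{Teyssier2019} direct route to the limit profile rather than the comparison argument of \cite{nestoridi2024comparing}, since (as the authors of \cite{nestoridi2024comparing} and of the present excerpt observe) the comparison with random transpositions can only be expected to work when $k/n\to 0$ or $1$. \emph{Step 1 (isolating the slow modes).} Using the eigenbasis produced by the lifting--eigenvectors construction behind Theorems \ref{T-EVRRT}--\ref{T-EVRST}, expand $\tfrac{P_k^t(\mathrm{id},\sigma)}{U(\sigma)}-1=\sum_{(\lambda,\mu)\neq((n),(n-k))}\eig(\lambda,\mu)^t\,\psi_{\lambda,\mu}(\sigma)$, where $\psi_{\lambda,\mu}$ is the ``lifted character'' attached to the pair (for random transpositions, $\mu=\varnothing$ and $\psi_{\lambda,\varnothing}\propto\chi^{\lambda}$). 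By Theorem \ref{T-EVRST} and the cutoff estimate of Theorem \ref{T-Cutoff}, at $t=t_{n,k}(c)$ the only pairs whose contribution does not vanish are, for each fixed $j$, the ``horizontal--strip'' pairs $(\lambda,\mu)=\big((n-j,\nu),(n-k-j,\nu)\big)$ with $\nu\vdash j$ (for which $\lambda\setminus\mu$ is a horizontal $k$-strip and $d_{\lambda\setminus\mu}=1$), their near-column analogues (killed by the $\tfrac1n$ laziness), and, in the regime $n-k=o(n)$, further off-diagonal pairs such as $\big((n-j,\nu),(n-k)\big)$. For the horizontal-strip family $\eig(\lambda,\mu)=1-j\,\tfrac{2(n-1)}{n(2n-(k+1))}$, so $\eig(\lambda,\mu)^{t_{n,k}(c)}\sim n^{-j}e^{-jc}$ and the contributing multiplicity is $d_{(n-j,\nu)}d_{(n-k-j,\nu)}$. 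Proving that everything else is $o(1)$, uniformly in $\sigma$ after dividing by $U$, is a lengthy but routine refinement of the proof of Theorem \ref{T-Cutoff}, driven by the bound $4\|P_k^t-U\|_{\mathrm{T.V.}}^2\le\sum_{(\lambda,\mu)}d_\lambda d_\mu d_{\lambda\setminus\mu}\,\eig(\lambda,\mu)^{2t}$.

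\emph{Step 2 (identifying the tilted density).} One would then compute the $\psi_{\lambda,\mu}$ for the surviving pairs explicitly enough to recognize $\lim_n\sum\eig(\lambda,\mu)^{t_{n,k}(c)}\psi_{\lambda,\mu}(\sigma)$ as $F_c\big(N(\sigma)\big)$ for an explicit function $F_c$ and an explicit integer statistic $N$ of $\sigma$; the limit profile is then $\tfrac12\,\mathbb E\big|F_c(N)-1\big|$, i.e.\ the total-variation distance between the limiting law of $N$ under $U$ and the law whose Radon--Nikodym derivative with respect to it is $F_c$. For random transpositions $N=\#\{\text{fixed points}\}$, asymptotically $\mathrm{Poiss}(1)$ under $U$, and the tilt turns it into $\mathrm{Poiss}(1+e^{-c})$; the conjecture asserts that this same pair of limiting laws survives for every $k$. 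A matching lower bound would follow, as in \cite{Teyssier2019}, by using $N$ as a test statistic and separating its first two moments under $P_k^{t_{n,k}(c)}(\mathrm{id},\cdot)$ and under $U$ by means of the eigenvalue $1-\tfrac{2(n-1)}{n(2n-(k+1))}$ and its multiplicity.

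\emph{Expected obstacle.} The genuinely hard point, and the reason the comparison method stalls for $k/n$ bounded away from $0,1$, is Step 2 — precisely the ``variation on the multiplicities'' flagged in the excerpt. When $k\asymp n$ with $n-k\asymp n$, the slowest non-trivial eigenvalue $1-\tfrac{2(n-1)}{n(2n-(k+1))}$ is carried not by the full standard module but by the skew module attached to $\big((n-1,1),(n-k-1,1)\big)$, of multiplicity $(n-1)(n-k-1)$ — a factor $1-k/n$ smaller than the $(n-1)^2$ that governs random transpositions — and the lifted characters $\psi_{\lambda,\mu}$ are no longer symmetric functions of the cycle type. Indeed a direct evaluation of the Dirichlet-form sum in Step 1 gives, with $\gamma=\lim k/n\in(0,1)$, $\sum_{(\lambda,\mu)}d_\lambda d_\mu d_{\lambda\setminus\mu}\,\eig(\lambda,\mu)^{2t_{n,k}(c)}\to e^{(1-\gamma)e^{-2c}}-1$, strictly below the $e^{e^{-2c}}-1$ obtained for random transpositions (the two agree only as $\gamma\to0$, or when $n-k=o(n)$, where the off-diagonal families take over). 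Hence identifying $F_c$ and $N$ really does require an explicit handle on the lifted eigenvectors restricted to these skew modules — presumably expressed through fixed points, or near-fixed points, among the inactive coordinates $1,\dots,n-k$ — and one cannot exclude a priori that the correct profile is a total-variation distance between Poisson laws whose parameters depend on $\lim k/n$. Settling this dichotomy is the heart of the problem, and a successful proof of the conjecture as stated would presumably need to show that, despite the smaller $\ell^2$ mass, the $L^1$ geometry of the tilted density reassembles exactly the $\mathrm{Poiss}(1)$--to--$\mathrm{Poiss}(1+e^{-c})$ picture.
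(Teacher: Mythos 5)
The statement you are addressing is left as an \emph{open conjecture} in the paper: the authors prove the Poisson limit profile only under the hypothesis $k/n\to 0$ or $1$ (Theorem \ref{T-LP}), via the comparison Lemma \ref{comp}, and explicitly record that the comparison technique fails for general $k$. There is thus no proof in the paper to compare against, and your proposal does not supply one: it is a research program whose decisive step is left open. Concretely, Step 2 --- identifying the lifted eigenfunctions $\psi_{\lambda,\mu}$ on the surviving skew modules, the statistic $N$, and the tilt $F_c$ --- is precisely the unresolved content of the conjecture, and you concede as much (``one cannot exclude a priori that the correct profile'' depends on $\lim k/n$). Step 1 is also not ``routine'': the $\ell^2$ bound behind Theorem \ref{T-Cutoff} controls only an average over $\sigma$, whereas a limit-profile argument in the style of \cite{Teyssier2019} needs pointwise control of $P_k^{t}(\mathrm{id},\sigma)/U(\sigma)$; for $1<k<n$ the increment distribution is not constant on conjugacy classes, the $\psi_{\lambda,\mu}$ are not characters, and the character-ratio estimates that drive Teyssier's truncation have no off-the-shelf analogue here.

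Moreover, your own quantitative observations point away from, not toward, the conjectured formula when $\gamma=\lim k/n\in(0,1)$. Beyond the $\ell^2$ mass $e^{(1-\gamma)e^{-2c}}-1$ you compute, Lemma \ref{L-low} already yields $\lim_{n}E_{P_{\mathrm{id}}^{t}}(\chi_{(n-1,1)})=(1-\gamma)e^{-c}$ at $t=t_{n,k}(c)$: the eigenvalue $1-\tfrac{2(n-1)}{n(2n-k-1)}$, of multiplicity $n-1-k$ inside the $(n-1,1)$-isotypic block, contributes $(1-\gamma)e^{-c}$, while the eigenvalue $1-\tfrac{2(n-1)}{k(2n-k-1)}$, of multiplicity $k$, contributes $\gamma\,n^{1-1/\gamma}e^{-c/\gamma}\to 0$. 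Hence the expected number of fixed points at the cutoff time tends to $1+(1-\gamma)e^{-c}$, and the test-statistic lower bound you propose in Step 2 would deliver $d_{\textup{T.V.}}(\textup{Poiss}(1+(1-\gamma)e^{-c}),\textup{Poiss}(1))$, strictly below the conjectured profile for $0<\gamma<1$. So either fixed points are not the correct statistic for general $k$, or the conjectured parameter $e^{-c}$ should be $(1-\gamma)e^{-c}$; your proposal resolves neither alternative, and the conjecture remains open.
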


The paper is organized as follows: Section \ref{prel} gives all the important definitions and tools needed from representation theory in order to prove Theorems \ref{T-EVRRT}, \ref{T-EVRST}, \ref{T-Cutoff}, and \ref{T-LP}. Section \ref{eigen} presents the proof of Theorems \ref{T-EVRRT} and \ref{T-EVRST}. Section \ref{boundeigen} provides bounds for the eigenvalues of $P_k$, which are later used in Section \ref{upper}
to prove the upper bound of Theorem \ref{T-Cutoff}. Section \ref{lower} presents the proof of the lower bound of Theorem \ref{T-Cutoff}. Section \ref{lp} discusses the limit profile of the $k$--star and proves Theorem \ref{T-LP}.

\section{Preliminaries}\label{prel}
In this section, we give all necessary definitions borrowed from the representation theory of the symmetric group.

\subsection{Representation theory of the symmetric group}
The irreducible representations of $S_n$ are indexed by partitions of $n$, defined as follows.
\begin{definition}
     A partition $\lambda$ of a positive integer $n$ (denoted by $\lambda \vdash n$) with $m$ parts is a vector  $\lambda = (\lambda_1, \lambda_2, \dots, \lambda_m)$ with integer coordinates, where $\lambda_1 \geq \lambda_2 \geq \dots \geq \lambda_m > 0$ and $\sum_{i=1}^{m} \lambda_i = n$. 
\end{definition}
Every partition can be depicted as a Young diagram, which has $\lambda_i$ boxes in row $i$.
\begin{definition}

Let $\lambda = (\lambda_1, \lambda_2, \dots, \lambda_m)$ and $\mu = (\mu_1, \mu_2, \dots, \mu_{\ell})$ be two partitions of the same integer $n$. We say that $\lambda$ dominates $\mu$ (denoted $\lambda \trianglerighteq \mu$) if:

$$
\sum_{i=1}^l \lambda_i \geq \sum_{i=1}^l \mu_i \quad \text{for all } l = 1, 2, \dots, \min \{\ell, m\}.$$

\end{definition}

\begin{example}
Let $\lambda=(4, 4)$ and $\mu= (4, 3, 1) $, then $ \lambda \trianglerighteq \mu $. The corresponding Young diagrams are:
\begin{center}
\begin{minipage}{6cm}
\centering
$\lambda=(4, 4)$
$$
\begin{ytableau}
\ & \ & \ & \\\
\ & \ & \ & \ \\
\end{ytableau}
$$
\end{minipage}
\begin{minipage}{6cm}
\centering
$\mu= (4, 3, 1) $
$$
\begin{ytableau}
\ & \ & \ & \ \\
\ & \ & \ \\
\  \\
\end{ytableau}
$$
\end{minipage}
\end{center}
\end{example}

\

\begin{definition}

The content of a box $(i,j)$ in a Young diagram is defined as:
$$
d_{(i,j)} = j - i,
$$
where:
\begin{enumerate}
    \item $ i $ is the row number of the box (starting from $ 1 $),
    \item $ j $ is the column number of the box (starting from $ 1 $).
\end{enumerate}

The diagonal index of a Young tableau is the sum of the contents of all boxes in the tableau, namely
$$
\textup{Diag}(T)= \sum_{(i,j) \in T} d_{(i,j)} = \sum_{(i,j) \in T} (j - i),
$$
where $ T $ represents the set of all the boxes in the Young tableau.
\end{definition}

For example, consider a Young tableau of shape $ \lambda = (4, 3, 1) $. The contents of the boxes of $\lambda$ are given in the following picture:
$$
\begin{ytableau}
0 & 1 & 2 & 3 \\
-1 & 0 & 1 \\
-2
\end{ytableau}
$$

The analysis of the eigenvalues is smoother if we shift these contents in the following manner, so that the shifted values are all positive.
\begin{definition}
Let $\lambda = (\lambda_1, \lambda_2, ..., \lambda_m) \vdash n$. The $\lambda_1$-shifted content of a box in the $i$-th row and $j$-th column of the tableau is defined as:

$$
\text{$\lambda_1$-shifted content } = \lambda_1 - (j - i) = \lambda_1 - d_{(i,j)}.
$$

\end{definition}

For example, consider a Young tableau of shape $ \lambda = (4, 3, 1) $. The $\lambda_1$-shifted contents of the boxes of the tableau are:

$$
\begin{ytableau}
4 & 3 & 2 & 1 \\
5 & 4 & 3 \\
6
\end{ytableau}
$$

To discuss the formulas and the multiplicities of the eigenvalues, we need the notion of a standard Young tableau of shape $\lambda$ (see the following definition).

\begin{definition}
 A standard Young tableau is a filling of a Young diagram with the numbers $1, 2, \ldots, n$ such that entries increase along each row and each column. Denote by $SYT(\lambda)$ the set of standard Young tableaux of shape $\lambda $ and let $d_{\lambda}= |SYT(\lambda)|$.  
\end{definition}

\begin{example}
The partition $\lambda=(4,3,1)$ corresponds to the Young diagram and a standard Young tableau
\begin{center}
\begin{minipage}{6cm}
\centering
Young Tableau
$$
\lambda=
\ytableausetup{centertableaux}
\begin{ytableau}
\text{ } & \text{ } & \text{ }  & \text{ } \\
\text{ } & \text{ } & \text{ }\\
\text{ }\\
\end{ytableau}
$$
\end{minipage}
\begin{minipage}{6cm}
\centering
Standard Young Tableau
$$
\ytableausetup{centertableaux}
S=\begin{ytableau}
1 & 2 & 4 & 6 \\
3 & 5 & 8\\ 
7\\
\end{ytableau}  
$$
\end{minipage}
\end{center}
\end{example}


\begin{definition}[Jucys--Murphy elements of $S_n$]\label{def:Jucys-Murphy}
For $2 \leq m \leq n,$  the $m$--th Jucys--Murphy element is
    $$ J_m=\sum_{i=1}^{m-1} (i, m), $$
where we view the transposition $(i,m)$ as a map from $S_n$ to $S_n$ that sends $\sigma \in S_n$ to $\sigma \cdot (i,m)$. 
\end{definition}

Note that $J_m$ can be viewed as an $n! \times n!$ matrix.

The eigenvalue corresponding to $S \in SYT(\lambda)$ is given as an expression of the following statistic.
\begin{definition} Let $\lambda=(\lambda_1, \dots, \lambda_m)$ be a partition of $n$ and $S \in SYT(\lambda)$. We define the $k$--diagonal index of $S$ as follows:
 $$D_{S}^{k}=\sum_{n-k<S(i,j) \leq n} (j-i),$$
 which is the sum of the contents of the boxes containing the numbers $n-k+1, \ldots, n$ in $S$.
\end{definition}

Similarly, we consider the shifted $k$--diagonal index, which will make the analysis better.
\begin{definition}\label{shifted}
Let $\lambda=(\lambda_1, \dots, \lambda_m)$ be a partition of $n$ and $S \in SYT(\lambda)$, then we define 
  $$A_{S}^{k}=k\lambda_1-D^{k}_{S}=\sum_{n-k< S(i,j) \leq n} (\lambda_1-(j-i)).$$
\end{definition}


To bound the eigenvalues of the transition matrix, we look at two special cases of standard Young tableaux.
\begin{definition}[Row-Insertion Tableau]\label{rinsertion}
Let $\lambda  \vdash n$. The row-insertion tableau, denoted $T_{\lambda^{\to}}$, of shape $\lambda$ is formed by inserting the numbers $1, 2, \dots, n$ row by row, from left to right, starting from the top row. 
\end{definition}
Similarly, we define the column insertion tableau.
\begin{definition}[Column-Insertion Tableau]\label{cinsertion}
Let $\lambda   \vdash n$. The column-insertion tableau, denoted $T_{\lambda^{\downarrow}}$, of shape $\lambda$ is formed by inserting the numbers $1, 2, \dots, n$ column by column, from top to bottom, starting from the leftmost column.
\end{definition}

\begin{example}
For $ \lambda = (4,3,1) $ we have:

\begin{center}
\begin{minipage}{6cm}
\centering
Row-Insertion Tableau ($ T_{\lambda^{\to}} $):
$$
\begin{ytableau}
1 & 2 & 3 & 4 \\
5 & 6 & 7\\
8\\
\end{ytableau}
$$
\end{minipage}
\begin{minipage}{6cm}
\centering
Column-Insertion Tableau ($ T_{\lambda^{\downarrow}} $):
$$
\begin{ytableau}
1 & 4 & 6 & 8  \\
2 & 5 & 7\\
3
\end{ytableau}
$$
\end{minipage}
\end{center}
\end{example}

To study the multiplicity of eigenvalues of the $k$--star transpositions, we need to introduce the notion of skew Young tableaux.
\begin{definition} Let $\lambda \vdash n$ and $\mu \vdash n-k$ be such that $\mu \subseteq \lambda$ when viewed as diagrams.
A skew shape (or skew Young tableau) $\lambda/\mu$ is the collection of boxes obtained by removing the boxes of $\mu$ from the Young diagram of $\lambda$.
\end{definition}
We will also consider fillings of skew diagrams as explained below.
\begin{definition}
Let $\lambda \vdash n$ and $\mu \vdash n-k$. A standard skew tableau is a filling of $\lambda/\mu$ with numbers from $1, \dots, k$ so that the labels of the rows and columns are in increasing order.
Let $d_{\lambda/ \mu}$ be the number of standard skew Young tableaux of shape $\lambda /\mu$.  
\end{definition}

\begin{example}
The partitions $\lambda=(4,3,2)$ and $\mu=(2,1)$ correspond to a skew shape and a skew tableau of shape $\lambda / \mu$
\begin{center}
\begin{minipage}{6cm}
\centering
Skew Shape
$$
\lambda/\mu=
\ytableausetup{centertableaux}
\begin{ytableau}
\none & \none & \text{ } & \text{ } \\
\none & \text{ } & \text{ }\\
\text{ } & \text{ }\\
\end{ytableau}
$$
\end{minipage}
\begin{minipage}{6cm}
\centering
Skew Tableau
$$
\ytableausetup{centertableaux}
S_{\lambda / \mu} =\begin{ytableau}
\none & \none & 1 & 3 \\
\none & 2 & 5\\
4 & 6\\
\end{ytableau}
$$
\end{minipage}
\end{center}
\end{example}

\subsection{The $\ell_2$ bound}   
In this section, we explain the connection between the eigenvalues of $P_E$ and the total variation distance mixing. We adapt the following lemma, borrowed from \cite{LPW-MCMT}, to the case of $S_n$.

\begin{lemma}[Lemma 12.6 \cite{LPW-MCMT}]
    Let $P$ be a reversible, irreducible and aperiodic transition matrix of a random walk on $S_n$. Then the eigenvalues of $P$ satisfy
$$-1< \xi_{n!-1}\leq \xi_{n!-2} \leq \cdot\cdot\cdot \leq \xi_1 < \xi_0=1 $$ 
and 

\begin{equation}\label{l2}
     2\Vert P_{x}^{t}-U\Vert_{T.V.} \leq \left(\sum_{i=1}^{n!-1} \xi_{i}^{2t}\right)^{1/2},
     \end{equation}
    where the sum is over non-one eigenvalues of transition matrix $P$. 

\end{lemma}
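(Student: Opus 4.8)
The plan is to run the standard $\ell^2$ (spectral) contraction argument, using reversibility to get an orthonormal eigenbasis, Perron--Frobenius for the location of the spectrum, and Parseval plus Cauchy--Schwarz to pass from total variation to a sum of powers of eigenvalues; the only problem-specific input is the transitivity of these shuffles, which is what lets one drop the eigenfunction values from the final bound.

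First I would record the structural facts. Since the stationary distribution is the uniform measure $U$ on $S_n$ and $P$ is reversible with respect to it, $P$ is a self-adjoint operator on $L^2(U)$, the space of real functions on $S_n$ with inner product $\langle f,g\rangle_U=\frac1{n!}\sum_{x\in S_n}f(x)g(x)$. The spectral theorem then yields an orthonormal basis of real eigenfunctions $f_0,f_1,\dots,f_{n!-1}$ with $Pf_i=\xi_i f_i$ and $\xi_i\in\R$. Irreducibility together with aperiodicity lets me invoke Perron--Frobenius: the eigenvalue $1$ is simple, with eigenfunction the constant $f_0\equiv 1$, and every other eigenvalue is strictly smaller than $1$ in absolute value; relabelling so that $\xi_0=1>\xi_1\ge\cdots\ge\xi_{n!-1}>-1$ gives the stated ordering of the spectrum.

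Next I would set up the spectral expansion of the $t$-step kernel. Expanding the function $y\mapsto P^t(x,y)/U(y)$ in the basis $\{f_i\}$, the coefficient of $f_i$ is $\langle P^t(x,\cdot)/U(\cdot),f_i\rangle_U=\sum_y P^t(x,y)f_i(y)=(P^tf_i)(x)=\xi_i^t f_i(x)$, so that, after removing the $i=0$ term,
\[
\frac{P^t(x,y)}{U(y)}-1=\sum_{i=1}^{n!-1}\xi_i^t f_i(x)f_i(y).
\]
Cauchy--Schwarz with the probability weights $U(y)$ then converts the total variation distance into an $L^2$ quantity,
\[
2\|P_x^t-U\|_{T.V.}=\sum_y U(y)\Bigl|\tfrac{P^t(x,y)}{U(y)}-1\Bigr|\le\Bigl(\sum_y U(y)\bigl(\tfrac{P^t(x,y)}{U(y)}-1\bigr)^2\Bigr)^{1/2},
\]
and Parseval's identity evaluates the right-hand side as $\bigl(\sum_{i=1}^{n!-1}\xi_i^{2t}f_i(x)^2\bigr)^{1/2}$.

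The last step, and the only place where the structure of the shuffle (rather than generalities about reversible chains) is used, is to remove the dependence on $f_i(x)^2$. Because these shuffles are random walks on the group $S_n$, one has $P^{2t}(x,x)=\mu^{*2t}(\mathrm{id})$ for every $x$, where $\mu$ is the step distribution; comparing with the spectral expansion at $y=x$ shows that $\sum_{i\ge1}\xi_i^{2t}f_i(x)^2=n!\,\mu^{*2t}(\mathrm{id})-1$ is independent of $x$. Averaging this identity over $x$ against $U$ and using $\sum_x U(x)f_i(x)^2=\|f_i\|_U^2=1$ replaces it by $\sum_{i=1}^{n!-1}\xi_i^{2t}$, which combined with the previous display gives the claimed inequality. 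I expect the only thing to be careful about is the bookkeeping of the $L^2(U)$ normalization and the invocation of transitivity; conceptually there is no serious obstacle, as this is precisely the classical $\ell^2$ bound specialized to a transitive chain.
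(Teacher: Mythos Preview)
Your argument is correct and is exactly the standard spectral-$\ell^2$ bound specialized to a transitive chain; the only subtlety, which you handle cleanly, is using the group-walk structure to replace $\sum_i\xi_i^{2t}f_i(x)^2$ by $\sum_i\xi_i^{2t}$. The paper does not prove this lemma at all: it is quoted as Lemma~12.6 of \cite{LPW-MCMT} and used as a black box, so there is no paper-proof to compare against beyond noting that your derivation is the textbook one underlying that citation.
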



We note that, as shown in Theorem 6, Chapter 3E of \cite{Diaconis1988}, the eigenvalues of the transition matrix $P_E$ are indexed by the irreducible representations of $S_n$.

\begin{lemma}[Proposition 1.10.1 \cite{E-S}]
For the symmetric group $S_n$ on $n$ letters, 
$\sum_{\lambda \vdash n} d_{\lambda}^2 = n! $
and 
$
d_{\lambda} \leq \sqrt{n!},
$ for every $\lambda \vdash n$.
\end{lemma}
 We will also use the following, more refined bound.
\begin{lemma}[Corollary 2, \cite{Diaconis1981}]\label{L-DUP}
Let $\lambda = (\lambda_1, \lambda_2, ..., \lambda_m)$ be a partition of $n$.
\begin{equation*}
    d_{\lambda} \leq \binom{n}{\lambda_1}\sqrt{(n-\lambda_1)!}
\end{equation*}
\end{lemma}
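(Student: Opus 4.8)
The plan is to exploit the recursive structure of standard Young tableaux by peeling off the first row, over-counting slightly, and then invoking the crude dimension bound $d_\mu \le \sqrt{|\mu|!}$ recorded above.

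First I would observe that for any $S \in SYT(\lambda)$ with $\lambda = (\lambda_1, \lambda_2, \dots, \lambda_m) \vdash n$, the set $E$ of entries appearing in the first row is a subset of $[n]$ of size exactly $\lambda_1$. Since the rows of $S$ are strictly increasing, the first row of $S$ is completely determined by $E$ (one simply lists the elements of $E$ in increasing order). There are at most $\binom{n}{\lambda_1}$ choices for $E$.

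Next I would delete the first row of $S$ to obtain a filling $S'$ of the Young diagram $(\lambda_2, \dots, \lambda_m)$ by the $n-\lambda_1$ elements of $[n] \setminus E$. Because the rows and columns of $S$ are strictly increasing, so are those of $S'$; hence, relabeling the entries of $S'$ by their ranks within $[n] \setminus E$ produces a genuine standard Young tableau of shape $(\lambda_2, \dots, \lambda_m) \vdash n - \lambda_1$. The assignment $S \mapsto (E, S')$ is injective, since from $E$ we recover the first row and from $(E, S')$ we recover rows $2, \dots, m$. Therefore
\[
d_\lambda \;\le\; \binom{n}{\lambda_1}\, d_{(\lambda_2,\dots,\lambda_m)}.
\]
This is an overcount — it ignores the column-compatibility constraint between the first two rows — but that is harmless for an upper bound.

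Finally I would apply the bound $d_\mu \le \sqrt{|\mu|!}$, valid for any irreducible representation of a symmetric group (a consequence of $\sum_{\mu \vdash m} d_\mu^2 = m!$, already stated above), to $\mu = (\lambda_2, \dots, \lambda_m) \vdash n - \lambda_1$, giving $d_{(\lambda_2,\dots,\lambda_m)} \le \sqrt{(n-\lambda_1)!}$. Combining this with the previous display yields the claim. There is no genuine obstacle here; the only point deserving a moment's care is verifying that deleting the first row indeed leaves a standard filling of the smaller shape, i.e., that the strictness of the column inequalities is inherited from $S$.
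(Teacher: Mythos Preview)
The paper does not supply its own proof of this lemma; it is quoted as Corollary~2 of \cite{Diaconis1981} and used as a black box. Your argument is correct and is in fact the standard way this bound is established: the map $S \mapsto (E, S')$ injects $SYT(\lambda)$ into $\binom{[n]}{\lambda_1} \times SYT(\lambda_2,\dots,\lambda_m)$, and the crude bound $d_\mu \le \sqrt{|\mu|!}$ (which the paper records just above) finishes the job.
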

To bound the multiplicities of the eigenvalues, we will also need the following asymptotic formula of the number of partitions $p(n)$ of $n$ which was proven in \cite{Hardy2000}.  
\begin{lemma} [Hardy-Ramanujan]\label{T-HR}    \phantom{h}
    $$
        p(n) \sim \frac{1}{4n\sqrt{3}} \exp \left\{ \pi \sqrt{\frac{2n}{3}} \right\}.
    $$
\end{lemma}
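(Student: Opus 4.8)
The statement to be proved is the classical Hardy--Ramanujan asymptotic for the partition function $p(n)$, and I will sketch the circle-method / saddle-point proof of \cite{Hardy2000} in the streamlined form that already delivers the leading-order asymptotic. \textbf{Setup.} Let $F(q)=\sum_{n\ge 0}p(n)q^{n}=\prod_{k\ge 1}(1-q^{k})^{-1}$, convergent for $|q|<1$. Cauchy's formula gives, for every $0<r<1$,
\[
p(n)=\frac{1}{2\pi i}\oint_{|q|=r}\frac{F(q)}{q^{n+1}}\,dq=\frac{r^{-n}}{2\pi}\int_{-\pi}^{\pi}F(re^{i\theta})\,e^{-in\theta}\,d\theta .
\]
The whole argument is: choose $r=r_{n}\to 1^{-}$ so that the integrand concentrates sharply at $\theta=0$, replace $F$ near $q=1$ by its asymptotic expansion, and show the remainder of the circle is negligible.

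\textbf{Behavior of $F$ near $q=1$.} Put $q=e^{-t}$, $t\to 0^{+}$. Expanding $\log(1-x)$ gives $\log F(e^{-t})=\sum_{m\ge 1}\frac{1}{m(e^{mt}-1)}$, and a Mellin-transform computation (the transform of $(e^{u}-1)^{-1}$ is $\Gamma(s)\zeta(s)$, so the sum equals $\frac{1}{2\pi i}\int_{(c)}\Gamma(s)\zeta(s)\zeta(s+1)\,t^{-s}\,ds$; shift the contour left and collect the poles at $s=1$ and $s=0$), equivalently the modular transformation $\eta(-1/\tau)=\sqrt{-i\tau}\,\eta(\tau)$ of the Dedekind eta function, yields
\[
\log F(e^{-t})=\frac{\pi^{2}}{6t}+\frac12\log\frac{t}{2\pi}+O(t)\qquad(t\to 0^{+}),
\]
and, for $\theta$ small, the analytic continuation $\log F\big(e^{-(t-i\theta)}\big)=\frac{\pi^{2}}{6(t-i\theta)}+\frac12\log\frac{t-i\theta}{2\pi}+o(1)$. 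The other ingredient is a ``minor-arc'' bound: when $\theta$ is bounded away from $0$, $|F(re^{i\theta})|$ is far below $F(r)$. The extreme case is $\theta$ near a rational $2\pi h/k$ with $k\ge 2$, where $F$ has a secondary singularity of order $\exp(\pi^{2}/(6k^{2}t))\le\exp(\pi^{2}/(24t))$, which is exponentially smaller than $\exp(\pi^{2}/(6t))$; in between (the Farey dissection) it is smaller still.

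\textbf{Saddle point and evaluation.} The main term $\frac{\pi^{2}}{6t}+nt$ of $\log\big(F(e^{-t})e^{nt}\big)$ is minimized at $t_{n}=\pi/\sqrt{6n}$, with minimum value $\pi\sqrt{2n/3}$; take $r_{n}=e^{-t_{n}}$. With $\phi(\theta):=\log\big(F(r_{n}e^{i\theta})\,r_{n}^{-n}\,e^{-in\theta}\big)$ one checks $\phi'(0)=0$ by the choice of $t_{n}$, $\phi''(0)=-\pi^{2}/(3t_{n}^{3})$, and $e^{\phi(0)}=\sqrt{t_{n}/(2\pi)}\;e^{\pi\sqrt{2n/3}}$. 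On the major arc $|\theta|\le t_{n}^{3/2}\log n$ the higher-order terms of $\phi$ are uniformly $o(1)$, the minor arc contributes $O\big(\exp(\pi^{2}/(24t_{n}))\big)=o\big(e^{\pi\sqrt{2n/3}}\big)$ by the bound above, and the Gaussian integral gives
\[
p(n)\sim\frac{1}{2\pi}\,e^{\phi(0)}\int_{-\infty}^{\infty}e^{-\pi^{2}\theta^{2}/(6t_{n}^{3})}\,d\theta=\frac{1}{2\pi}\sqrt{\frac{t_{n}}{2\pi}}\,\sqrt{\frac{6t_{n}^{3}}{\pi}}\;e^{\pi\sqrt{2n/3}} .
\]
Since $t_{n}^{2}=\pi^{2}/(6n)$, the prefactor equals $\frac{\sqrt3\,t_{n}^{2}}{2\pi^{2}}=\frac{\sqrt3}{12n}=\frac{1}{4n\sqrt3}$, which is exactly the claimed formula.

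\textbf{Main obstacle.} The delicate step is the minor-arc estimate: bounding $|F(re^{i\theta})|$ uniformly in $\theta$, in particular near every root of unity and in the transition regions of the Farey dissection, so that only the arc around $q=1$ survives. For the leading asymptotic one needs only the crude statement that everything outside that arc contributes $O\big(\exp(\pi^{2}/(24t_{n}))\big)=o(p(n))$, but making this uniform is the technical heart of the argument. A route that avoids the contour surgery is to feed the refined expansion of $\log F(e^{-t})$ into an Ingham-type Tauberian theorem, using that $p(n)$ is nondecreasing; this still requires the precise secondary term $\frac12\log(t/2\pi)$ (equivalently, the eta transformation), which remains the main computational input.
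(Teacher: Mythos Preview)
Your sketch is a correct outline of the classical circle-method proof and the final arithmetic checks out. However, the paper does not prove this lemma at all: it is stated as a quoted result with a citation to \cite{Hardy2000} and used only as a black box to bound the number of partitions appearing as a multiplicity factor in the $\ell_2$ estimate. So there is nothing to compare against; you have supplied a proof where the paper supplies none. For the paper's purposes even the much weaker bound $p(n)=e^{O(\sqrt{n})}$ would suffice, which can be obtained elementarily (e.g.\ from $\log F(e^{-t})\le \pi^{2}/(6t)$ alone, without the modular transformation or any minor-arc analysis), so your identification of the minor-arc estimate as the ``main obstacle'' is accurate for the full asymptotic but overkill for what the paper actually needs.
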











\subsection{Limit profiles}
The main tool for studying the limit profile of the $k$--star transpositions is the following adaptation of Lemma 1.4 from \cite{nestoridi2024comparing}.

\begin{lemma}[Lemma 1.4, \cite{nestoridi2024comparing}]\label{comp}
Let $P$ and $Q$ be symmetric transition matrices of two Markov chains on $S_n$ that share the same eigenbasis.  Let $\beta_i$ and $ q_i$ be the eigenvalues of $P$ and $Q$ that respectively correspond to the same $i$-th eigenvector, with $\beta_1 = q_1 = 1$. Assume $P$ exhibits cutoff at $t_n$ with window $w_n$ and has limit profile $\Phi$ and similarly that $Q$ exhibits cutoff at $\bar{t}_n$ with window $\bar{w}_n$ and has limit profile $\bar{\Phi}$. Let $t= t_n+cw_n$ and $\bar{t}= \bar{t}_n+c\bar{w_n}$. We have
$$ |\Phi(c) - \bar{\Phi}{}(c)| \leq \frac{1}{2} \lim_{n\to\infty} \left( \sum_{i=2}^{|X|} (\beta_i^t - q_i^{\bar{t}})^2 \right)^{1/2}. $$

\end{lemma}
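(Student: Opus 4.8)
The plan is to reduce the inequality, for each fixed $n$, to a single deterministic estimate — a bound on $\bigl\|P^t(\textup{id},\cdot)-Q^{\bar t}(\textup{id},\cdot)\bigr\|_{\textup{T.V.}}$ — and then pass to the limit using the cutoff and limit-profile hypotheses. Throughout I use that in the situation of interest both chains are random walks on $S_n$, so $P(x,y)$ and $Q(x,y)$ depend only on $x^{-1}y$, and hence so do $P^t(x,y)$ and $Q^{\bar t}(x,y)$. A symmetric Markov transition matrix on $S_n$ is doubly stochastic, so both chains have uniform stationary distribution $U$, and by translation invariance
\[
d_P(t):=\max_{x}\bigl\|P^t(x,\cdot)-U\bigr\|_{\textup{T.V.}}=\bigl\|P^t(\textup{id},\cdot)-U\bigr\|_{\textup{T.V.}},\qquad d_Q(\bar t)=\bigl\|Q^{\bar t}(\textup{id},\cdot)-U\bigr\|_{\textup{T.V.}} .
\]
By the reverse triangle inequality for total variation,
\[
\bigl|d_P(t)-d_Q(\bar t)\bigr|\le\bigl\|P^t(\textup{id},\cdot)-Q^{\bar t}(\textup{id},\cdot)\bigr\|_{\textup{T.V.}} ,
\]
so it suffices to bound the right-hand side by $\tfrac12\bigl(\sum_{i=2}^{|X|}(\beta_i^t-q_i^{\bar t})^2\bigr)^{1/2}$ for each $n$; taking $t=t_n+cw_n$, $\bar t=\bar t_n+c\bar w_n$ and letting $n\to\infty$ then yields the statement, since the left-hand side converges to $|\Phi(c)-\bar\Phi(c)|$.

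For the deterministic estimate, set $R:=P^t-Q^{\bar t}$, a symmetric matrix on $\R^{|X|}$. Since $P$ and $Q$ are diagonalized by a common orthonormal basis, with $\beta_1=q_1=1$ attached to the constant vector, $R$ is diagonalized by that basis with eigenvalues $\beta_i^t-q_i^{\bar t}$, whence
\[
\operatorname{Tr}(R^2)=\sum_{i=1}^{|X|}\bigl(\beta_i^t-q_i^{\bar t}\bigr)^2=\sum_{i=2}^{|X|}\bigl(\beta_i^t-q_i^{\bar t}\bigr)^2 .
\]
On the other hand $R(x,y)$, being a difference of iterated convolutions of the two driving measures, depends only on $x^{-1}y$, and therefore so does $(R^2)(x,y)$; in particular $(R^2)(\sigma,\sigma)$ is independent of $\sigma\in S_n$, which forces $(R^2)(\textup{id},\textup{id})=\tfrac1{|X|}\operatorname{Tr}(R^2)$. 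Because $R$ is symmetric, $\sum_{\sigma}R(\textup{id},\sigma)^2=(R^2)(\textup{id},\textup{id})$, so Cauchy--Schwarz gives
\[
\bigl\|P^t(\textup{id},\cdot)-Q^{\bar t}(\textup{id},\cdot)\bigr\|_{\textup{T.V.}}=\tfrac12\sum_{\sigma}\bigl|R(\textup{id},\sigma)\bigr|\le\tfrac12\sqrt{|X|}\Bigl(\sum_{\sigma}R(\textup{id},\sigma)^2\Bigr)^{1/2}=\tfrac12\Bigl(\sum_{i=2}^{|X|}\bigl(\beta_i^t-q_i^{\bar t}\bigr)^2\Bigr)^{1/2}.
\]
Combining with the reduction above and letting $n\to\infty$ (replacing $\lim$ by $\limsup$ if necessary — in our applications the limit exists) completes the proof.

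The computation itself is routine; the one genuinely load-bearing point is the transitivity step, i.e.\ the use of the group structure of $S_n$ to conclude that $(R^2)$ has constant diagonal and hence to replace $\sum_\sigma R(\textup{id},\sigma)^2$ by $\tfrac1{|X|}\operatorname{Tr}(R^2)$. This is exactly what eliminates the base-point-dependent factors $|v_i(\textup{id})|^2$ that would otherwise spoil a naive $\ell_2$ bound, and it is the same mechanism that underlies \eqref{l2}. The argument is identical to the proof of Lemma~1.4 in \cite{nestoridi2024comparing}; the only adaptation is that we permit the two chains to have distinct cutoff times and windows ($t_n\neq\bar t_n$, $w_n\neq\bar w_n$), which is harmless since these quantities enter only through the fixed exponents $t$ and $\bar t$.
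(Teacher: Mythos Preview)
The paper does not prove this lemma; it merely quotes it from \cite{nestoridi2024comparing}. Your argument is correct and is essentially the standard one: reduce via the reverse triangle inequality to bounding $\|P^t(\textup{id},\cdot)-Q^{\bar t}(\textup{id},\cdot)\|_{\textup{T.V.}}$, pass to $\ell_2$ by Cauchy--Schwarz, and use translation invariance of random walks to identify $\sum_\sigma R(\textup{id},\sigma)^2$ with $|X|^{-1}\operatorname{Tr}(R^2)=|X|^{-1}\sum_{i\ge 2}(\beta_i^t-q_i^{\bar t})^2$. The only caveat is that the lemma as stated speaks of ``symmetric transition matrices on $S_n$'' without explicitly saying they are random walks, whereas your proof of the constant-diagonal step uses that $P(x,y)$ and $Q(x,y)$ depend only on $x^{-1}y$; you flag this yourself, and it is satisfied in all the applications in the paper, so this is not a defect.
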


Lemma \ref{comp} is instrumental in proving Theorem \ref{T-LP}. In particular, we compare the limit profile of the $k$--star transpositions with the limit profile of random transpositions. For this reason, we borrow the following result.
\begin{theorem}[Theorem 1.1 \cite{Teyssier2019}] For the random transpositions card shuffle at time $t = \frac{n}{2}(\log n + c)$, we have that
\begin{equation*}
\Phi(c) = d_{\textup{T.V.}}(\text{Poiss}(1 + e^{-c}), \text{Poiss}(1)),
\end{equation*}
for every $c \in \mathbb{R}$.
\end{theorem}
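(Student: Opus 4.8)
First, since random transpositions is a random walk on $S_n$, we have $d(t)=\|P^t_{\textup{id}}-U\|_{\textup{T.V.}}$, where $U$ is the uniform measure, and $P^t_{\textup{id}}$ is a class function. The eigenvalue of this shuffle at $\lambda\vdash n$ is $\eig(\lambda)=\tfrac{2}{n(n-1)}\textup{Diag}(\lambda)$ (Diaconis--Shahshahani), so the density
\[
g_t(\sigma):=n!\,P^t_{\textup{id}}(\sigma)=\sum_{\lambda\vdash n}d_\lambda\,\eig(\lambda)^t\,\chi_\lambda(\sigma)
\]
satisfies $2\,d(t)=\mathbb{E}_U|g_t-1|$, with $\chi_\lambda$ the irreducible character. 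The plan is to prove that at $t=\tfrac n2(\log n+c)$ one has $g_t\to h\big(\textup{fix}(\cdot)\big)$ in $L^1(U)$, where $\textup{fix}(\sigma)$ is the number of fixed points of $\sigma$ and
\[
h(j):=e^{-e^{-c}}(1+e^{-c})^j=\frac{\mathbb{P}(\textup{Poiss}(1+e^{-c})=j)}{\mathbb{P}(\textup{Poiss}(1)=j)}.
\]
Since $\textup{fix}$ converges in distribution (with all moments) to $\textup{Poiss}(1)$ under $U$, this yields $2\,d(t)\to\mathbb{E}_{\textup{Poiss}(1)}|h(X)-1|=2\,d_{\textup{T.V.}}(\textup{Poiss}(1+e^{-c}),\textup{Poiss}(1))$.

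The first step is a truncation in the length of the first row. Writing $\lambda=(n-b)\sqcup\bar\lambda$ with $\bar\lambda\vdash b$ sitting below the first row, the cells of $\bar\lambda$ drop one row inside $\lambda$, so $\textup{Diag}(\lambda)=\binom{n-b}{2}+\textup{Diag}(\bar\lambda)-b$ and hence $1-\eig(\lambda)=\big(nb+O(b^2)\big)/\binom n2$; this gives $\eig(\lambda)^{2t}\le n^{-2b}e^{-2bc}$ for $b\le\delta n$, and $\eig(\lambda)^{2t}$ exponentially small once $\lambda_1\le(1-\delta)n$. Together with Lemma~\ref{L-DUP} ($d_\lambda\le\binom{n}{b}\sqrt{b!}$) and Lemma~\ref{T-HR} (the number of such $\lambda$ is at most $p(b)\le e^{C\sqrt b}$), Plancherel gives $\mathbb{E}_U[g_t^2]=\sum_\lambda d_\lambda^2\,\eig(\lambda)^{2t}$, and the estimates above show this sum is finite and that $\sum_{\lambda:\,\lambda_1<n-B}d_\lambda^2\,\eig(\lambda)^{2t}\to0$ as $B\to\infty$. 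Hence $\{g_t\}$ is bounded in $L^2(U)$, and it suffices to analyze the truncated density $g_t^{(\le B)}:=\sum_{\lambda:\,\lambda_1\ge n-B}d_\lambda\,\eig(\lambda)^t\,\chi_\lambda$ for each fixed $B$ and then let $B\to\infty$.

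The second step is the asymptotics of a fixed block $\{\lambda=(n-b)\sqcup\bar\lambda:\bar\lambda\vdash b\}$ with $b$ fixed. The hook-length formula gives $d_\lambda\sim\tfrac{n^b}{b!}\,d_{\bar\lambda}$, and from $\textup{Diag}(\lambda)=\binom{n-b}{2}+O(1)$ one gets $\eig(\lambda)^t\to n^{-b}e^{-bc}$, independently of $\bar\lambda$; thus $d_\lambda\,\eig(\lambda)^t\to\tfrac{d_{\bar\lambda}}{b!}e^{-bc}$. On the character side, summing over $\bar\lambda\vdash b$ and using the branching rule (equivalently, that $\textup{Ind}_{S_{n-b}}^{S_n}(\mathbf 1)$ has character $\sigma\mapsto(\textup{fix}\,\sigma)_b$, the falling factorial), one obtains for $n$ large the identity $\sum_{\bar\lambda\vdash b}d_{\bar\lambda}\,\chi_{(n-b)\sqcup\bar\lambda}(\sigma)=\Psi_b(\textup{fix}(\sigma))$, where $\Psi_b(j)=\sum_{m=0}^b(-1)^{b-m}\binom bm (j)_m$ (for instance $\Psi_1(j)=j-1$ and $\Psi_2(j)=j^2-3j+1$); in particular all dependence on cycles of $\sigma$ of length $\ge2$ cancels within the block. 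Therefore $g_t^{(\le B)}(\sigma)\to\sum_{b=0}^B\tfrac{e^{-bc}}{b!}\Psi_b(\textup{fix}(\sigma))$, and the generating identity $\sum_{b\ge0}\tfrac{x^b}{b!}\Psi_b(j)=e^{-x}(1+x)^j$ identifies the $B\to\infty$ limit as $h(\textup{fix}(\sigma))$.

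Finally, the $L^2(U)$-bound from the truncation step gives uniform integrability, which upgrades the convergence $g_t\to h(\textup{fix})$ in probability under $U$ to convergence in $L^1(U)$; combined with $\textup{fix}\Rightarrow\textup{Poiss}(1)$, the moment control, and the polynomial growth of $h$, this gives $\mathbb{E}_U|g_t-1|\to\mathbb{E}_{\textup{Poiss}(1)}|h(X)-1|$, and the explicit form of $h$ as a ratio of Poisson weights turns the right-hand side into $2\,d_{\textup{T.V.}}(\textup{Poiss}(1+e^{-c}),\textup{Poiss}(1))$, proving the claim. The main obstacle is making the truncation quantitative enough: one needs not merely that the $\ell^2$ sum $\sum_\lambda d_\lambda^2\,\eig(\lambda)^{2t}$ is bounded but that it converges to the finite constant $\mathbb{E}_{\textup{Poiss}(1)}[h^2]=e^{e^{-2c}}$ and that the high-frequency tail is uniformly negligible — this sharp control of the spectrum, going beyond the cutoff upper bound, is exactly what allows the exact identification of the limit profile rather than merely cutoff with a window.
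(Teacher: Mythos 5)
The paper does not prove this statement: it is imported verbatim as Theorem 1.1 of Teyssier's paper and used only as an input to the comparison argument for Theorem \ref{T-LP}, so there is no internal proof to compare against. Your sketch is, in substance, a faithful reconstruction of Teyssier's actual argument: an $L^2$ truncation to the blocks $\lambda_1\ge n-B$, the observation that within the block $\{(n-b)\sqcup\bar\lambda:\bar\lambda\vdash b\}$ the coefficients $d_\lambda\,\eig(\lambda)^t$ all converge to $\tfrac{d_{\bar\lambda}}{b!}e^{-bc}$, the identity $\sum_{\bar\lambda\vdash b}d_{\bar\lambda}\chi_{(n-b)\sqcup\bar\lambda}=\Psi_b(\textup{fix})$ (which follows from $(j)_m=\sum_{b\le m}\binom{m}{b}\Psi_b(j)$, i.e.\ the decomposition of $\textup{Ind}_{S_{n-m}}^{S_n}\mathbf 1$, by binomial inversion), and the generating function $\sum_b\frac{x^b}{b!}\Psi_b(j)=e^{-x}(1+x)^j$ identifying the limit density as the Poisson likelihood ratio. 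I verified the key identity for $b=1,2$ and the inversion argument in general; the combinatorial core is sound.

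Two details need repair. First, the eigenvalue of the Diaconis--Shahshahani walk is $\frac1n+\frac{n-1}{n}\cdot\frac{2\,\textup{Diag}(\lambda)}{n(n-1)}$, not $\frac{2\,\textup{Diag}(\lambda)}{n(n-1)}$; with your formula the column shape $1^n$ has eigenvalue $-1$ and contributes $1$ to the Plancherel sum $\sum_\lambda d_\lambda^2\,\eig(\lambda)^{2t}$, so the $L^2$ truncation as written fails near the column partitions (this is exactly what the holding probability is there to fix, and after the affine correction your zone estimates go through, using $\textup{Diag}(\lambda^T)=-\textup{Diag}(\lambda)$ to handle negative eigenvalues). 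Second, $h(j)=e^{-e^{-c}}(1+e^{-c})^j$ grows geometrically, not polynomially, in $j$; the passage from $\textup{fix}\Rightarrow\textup{Poiss}(1)$ to $\mathbb{E}_U|h(\textup{fix})-1|\to\mathbb{E}_{\textup{Poiss}(1)}|h(X)-1|$ should instead invoke the factorial moments $\mathbb{E}_U[(\textup{fix})_m]=1$ for $m\le n$, which make exponential test functions admissible. Neither issue is conceptual, and with these corrections your outline matches the cited proof.
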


\section{The eigenvalues} \label{eigen}

In this section, we discuss the proofs of Theorems \ref{T-EVRRT} and \ref{T-EVRST}. We drop the assumption $n \in E$, which was needed only for irreducibility. 
Let $E$ be a non--decreasing sequence of subsets of $[n]$, as $n$ varies. Recall the definition of the transition matrix $P_{E}$ given in \eqref{TranJucy}.
\begin{equation*}
P_{E}=\frac{1}{n}I+\frac{n-1}{n|T_E|}\sum_{j \in E}J_j.
\end{equation*}


We begin by summarizing some properties of Jucys--Murphy elements, proved in proposition 3.26(iii) \cite{Mathas}, page 506 of \cite{MURPHY1992492} and proposition 2.11(ii) \cite{BCGS} (for $q=1)$.
\begin{lemma}\label{Jucy}
Jucys--Murphy elements have the following properties:
\begin{enumerate}

\item They pairwise commute.
\item Let $S \in SYT(\lambda)$ and m be in the box $(i_m,j_m)$ of $S$. The eigenvalue of $J_m$ corresponding to $S$ is

\begin{equation}
\eig_{J_m}(S)=j_m-i_m.
\end{equation}
\end{enumerate}
\end{lemma}

We now compute the eigenvalues of $P_E$. 
\begin{proof}[Proof of \text{ Theorem} \ref{T-EVRRT}]
Using Lemma \ref{Jucy}, Jucys--Murphy elements commute pairwise and also if $S(i,j)=m$, then we get $\eig_{J_m}(S)=j-i$. Therefore, we have 
\begin{equation*}
\eig(S)= \frac{1}{n}+\frac{(n-1)}{n|T_{E}|}\sum_{ S(i,j) \in E}  (j-i) .
\end{equation*}


\end{proof}


These eigenvalues take a simpler form when $E=\{n-k+1, \ldots, n\}$.
\begin{proof}[Proof of \text{ Theorem} \ref{T-EVRST}]

We now adapt the proof of Theorem \ref{T-EVRRT}, by setting $E=\{n-k+1, \dots, n\}$. Therefore, we have   

\begin{equation*}
\eig(S)= \frac{1}{n}+\frac{2(n-1)}{nk(2n-(k+1))}\sum_{n-k+1 \leq S(i,j) \leq n}  (j-i) .
\end{equation*}

Let $\mu$ be the partition of $n-k$ that is obtained from $\lambda$ and $S$ by removing the boxes of $\lambda$ that contain the numbers $n-k+1, \dots, n$. Removing these boxes also yields a standard Young tableau of shape $\mu$. In this way, we can index the eigenvalues of the $k$--star transpositions by pairs $(\lambda, \mu)$ 
where
$\lambda \vdash n $, $ \mu \vdash n-k$
  and $\mu \subseteq \lambda$. The corresponding formula is

\begin{equation*}   
\eig(\lambda,\mu)= \frac{1}{n} + \frac{2(n-1)}{nk(2n-(k+1))} \bigg(\textup{Diag}(\lambda)-\textup{Diag}(\mu)\bigg),
\end{equation*}
and the multiplicity of this eigenvalue
is $d_{\lambda}d_{\mu}d_{\lambda / \mu }$. This is because we count the number of ways the numbers $1$ through $n-k$ appear in $\mu$ which gives us $d_{\mu}$ and also the number of ways the labels $n-k+1, \dots, n$ can be arranged in the skew shape $\lambda / \mu$, which gives $d_{\lambda / \mu} $.
\end{proof}



\section{Bounding the eigenvalues} \label{boundeigen}  
In this section, we focus on the case $E=\{ n-k+1,\dots, n\}$. We present a few bounds on the eigenvalues that will help us with the analysis of \eqref{l2}. Recall the formulas of the eigenvalues of the $k$--star transpositions given in Theorem \ref{T-EVRST} and the definitions of $T_{\lambda^{\to}}$ and $T_{\lambda^{\downarrow}}$ from Definitions \ref{rinsertion} and \ref{cinsertion}.
\begin{lemma}\phantom{}
\label{lem:eig-bounds} Let $\lambda \vdash n$ and let $S \in SYT(\lambda)$. We have
        $$\mathrm{eig}\bigl(T_{\lambda^{\to}}\bigr)
        \leq 
        \mathrm{eig}(S)
        \leq
        \mathrm{eig}\bigl(T_{\lambda^{\downarrow}}\bigr),
        $$
        where 
        $\displaystyle T^{k}_{\lambda^{\downarrow}} \in SYT(\lambda)$ is the \emph{column-insertion tableau} and     $\displaystyle T^{k}_{\lambda^{\to}} \in SYT(\lambda)$ is the \emph{row-insertion tableau} as in Definitions \ref{rinsertion} and \ref{cinsertion}.

\end{lemma}

\begin{proof}\phantom{}
We will only prove that $\mathrm{eig}(S)
        \leq
        \mathrm{eig}\bigl(T^{k}_{\lambda^{\downarrow}}\bigr)$, since the other inequality is similar. We proceed by double induction on $(n, k)$. 
When $k=1$, we are in the star transpositions case. In this case, column-insertion  of the value $n$ places it in the highest possible row, which indeed maximizes the eigenvalue. So the inequality is true for $(n,1)$.
Assume now that the statement holds for any pair $(m, l)$ where $m < n$ and $ 1 \leq l \leq m $. Consider the pair $(n, k)$ with $k>1$ and consider $T \in SYT(\lambda)$ that maximizes the eigenvalue.

Case 1: $n$ is in the last column of $T$.
Remove the box containing $n$. By the inductive hypothesis for $(n-1, k-1)$, column-insertion maximizes the eigenvalue in the resulting tableau. This gives that $T= T_{\lambda^{\downarrow}}$.

Case 2: $n$ is not in the last column of $T$.
Remove the box containing $n$. The resulting standard Young tableau must maximize the eigenvalue for the corresponding shape for $(n-1, k-1)$, since adding the content of the removed box does not change the order of the eigenvalues. The inductive hypothesis for $(n-1, k-1)$ gives that $n-1$ must be in the last column of $\lambda$. The assumption $k>1$ guarantees that neither $n$ nor $n-1$ are inside $\mu$ from the eigenvalue formula given by equation \eqref{formula}. Since $n$ and $n-1$ i not in the same column or row, we can switch the positions of $n$ and $n-1$ without changing the eigenvalue (this move does not affect $Diag(\lambda)$ or $Diag(\mu)$). Now, $n$ is in the last column, reducing this to Case 1.

By the principle of double induction, the statement holds for all pairs $(n, k)$.
\end{proof}

Lemma \ref{lem:eig-bounds} says that to bound all eigenvalues $\mathrm{eig}(S)$, where $S \in SYT(\lambda)$, we should bound $\mathrm{eig}\bigl(T_{\lambda^{\downarrow}}\bigr)$, which is equal to
$$ \frac{1}{n} + \frac{2(n-1)}{nk(2n-(k+1))} D_{\lambda^\downarrow}^{k}. $$

To maximize $D_{\lambda^\downarrow}^{k}$, we should minimize $A_{\lambda^\downarrow}^{k}$, the $\lambda_1$-shifted diagonal index introduced in Definition \ref{shifted}. Focusing on the standard Young tableau $T^{k}_{\lambda^{\downarrow}}$ and the $k$ boxes that are removed to get $\mu$ (these are the boxes of $T^{k}_{\lambda^{\downarrow}}$ that contain the values $n-k+1$ through $n$), we see that the $\lambda_1$-shifted index of each box is given by the following picture.

\begin{center}
    \includegraphics[width=\linewidth]{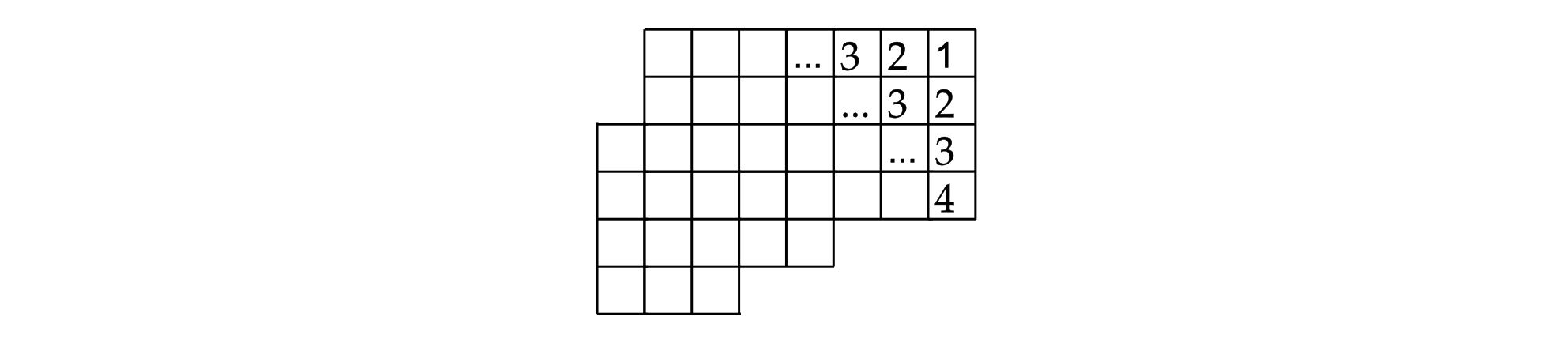}
\end{center}

In the following picture, notice that the filled box (green box) in the right skew shape is to the right and above the filled box (red box) in the left skew shape. Therefore, moving boxes to a higher position and to the right of the original position makes the $\lambda_1$-shifted diagonal index smaller. 
\begin{center}
    \includegraphics[width=\linewidth]{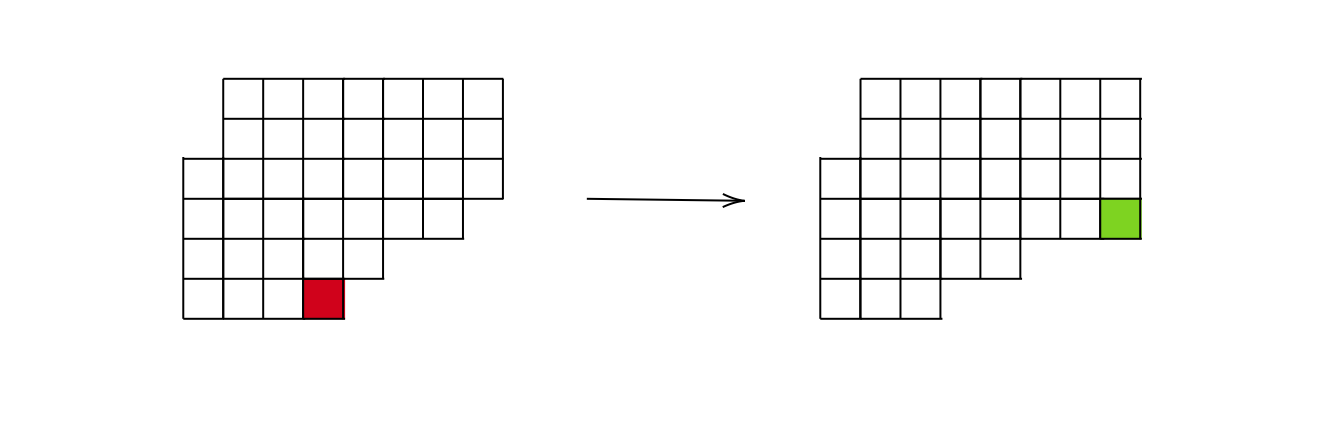}
\end{center}

Therefore, moving boxes to a higher row (without creating a new column) or to the right will allow us to bound $A_{\lambda^\downarrow}^{k}$ from below.

We consider the skew diagram obtained from $\lambda$ by removing the boxes that contain the labels $1, \ldots, n-k$. Since we are looking at the column insertion standard Young tableau $T_{\lambda^\downarrow}$, the resulting skew diagram is missing a few of the first columns of $\lambda$ and a few boxes from its first non--empty column. The rest of its columns of $\lambda$ is fully present.

We perform the above operation by moving the last corner of the diagram as far to the top and to the right as we can. We continue performing this operation until we can no longer do so, so that the outcome is still a skew diagram. At the end, we obtain a shape like the one below, which has four parameters $(p, q, r, s),$ where $r \leq p$ and $s < q$. Here $p$ denotes the number of columns in the full rows of the skew diagram, 
$q$ denotes the number of such full rows, $r$ denotes the number of extra boxes appended below the full rows, and $s$ denotes the number of rows that are missing boxes on the left.

\begin{center}
    \includegraphics[width=\linewidth]{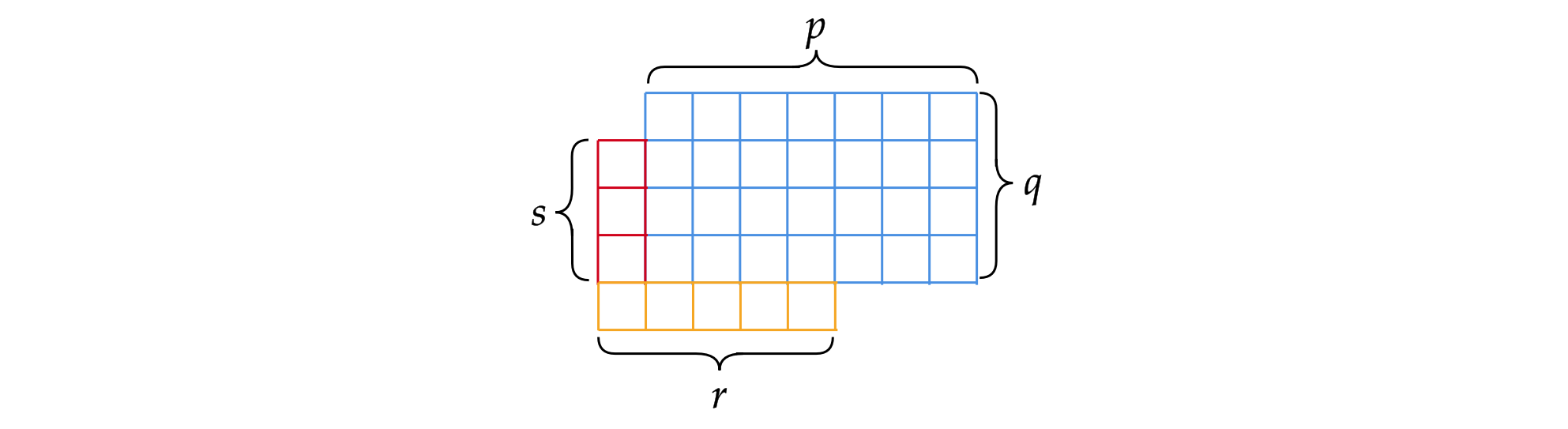}
\end{center}

 By symmetry, we can assume $q \leq p$. Also, we know $k = qp + r + s$, because the skew diagram has $k$ boxes in total. Since $\lambda$ is a Young diagram, we know that $\lambda_1 q \leq n$.

 We now prove a lower bound for $A_{\lambda^\downarrow}^{k}$.

\begin{lemma}\label{lowb1}

   Let $\lambda = (\lambda_1, \lambda_2, \dots, \lambda_m)$ be a partition of $n$ and $(q-1)^2 \geq r+s $. Then, $$A_{\lambda^\downarrow}^{k}\geq k+ \binom{k}{2}\frac{\lambda_1-1}{n-1},$$
   for every $1\leq k \leq n$.   
\end{lemma}
\begin{proof}
\begin{align*}
A_{\lambda^\downarrow}^k &= k + \sum_{j=1}^{p} \sum_{i=1}^{q} (i + j - 2) + \sum_{i=0}^{r-1} (p + q - i) + \sum_{i=1}^{s} (p + q - i) \\
&= k + \frac{1}{2} k (p + q - 2) + \frac{1}{2} r (p + q + 3 - r) + \frac{1}{2} s (p + q + 1 - s)
\end{align*}

Since $(q - 1)^2 \geq r + s$, we have $q - 2 \geq \frac{r + s - 1}{q}$.

Thus, we get
$$
A_{\lambda^\downarrow}^k \geq k + \frac{1}{2} k \left( p + \frac{r + s - 1}{q} \right) + \frac{1}{2} r(p + q + 3 - r) + \frac{1}{2} s(p + q + 1 - s).
$$

We also have
$$
p + \frac{r + s - 1}{q} = \frac{qp + r + s - 1}{q} = \frac{k - 1}{q}.
$$

Therefore, we obtain
$$
2 A_{\lambda^\downarrow}^k \geq 2k + k \frac{k - 1}{q} \geq 2k + k(k - 1) \frac{\lambda_1 - 1}{n - 1}.
$$

\end{proof}
The next lemma proves the same bound for the rest of the cases by reducing it to Lemma \ref{lowb1}.
\begin{lemma}\label{lowb2}

   Let $\lambda = (\lambda_1, \lambda_2, \dots, \lambda_m)$ be a partition of $n$ and $r+s> (q-1)^2$. Then, $$A_{\lambda^\downarrow}^{k}\geq k+ \binom{k}{2}\frac{\lambda_1-1}{n-1},$$
   for every $1\leq k \leq n$.   
\end{lemma}

\begin{proof}

By the parametrization above, $s \leq q - 1$. Therefore, $r > 0$ and $s \leq r$. So we can make the $(\lambda_1 - 1)$-shifted diagonal value smaller just by moving the top box (red box) in the first column of the following picture to the last box (green box) in the last column. As indicated in the figure, the $(\lambda_1 - 1)$-shifted diagonal value of the top box (red box) in the first column is $p + q - s$, while for the last box (green box) in the last row, it is $p + q - r$.

\begin{center}

    \includegraphics[width=\linewidth]{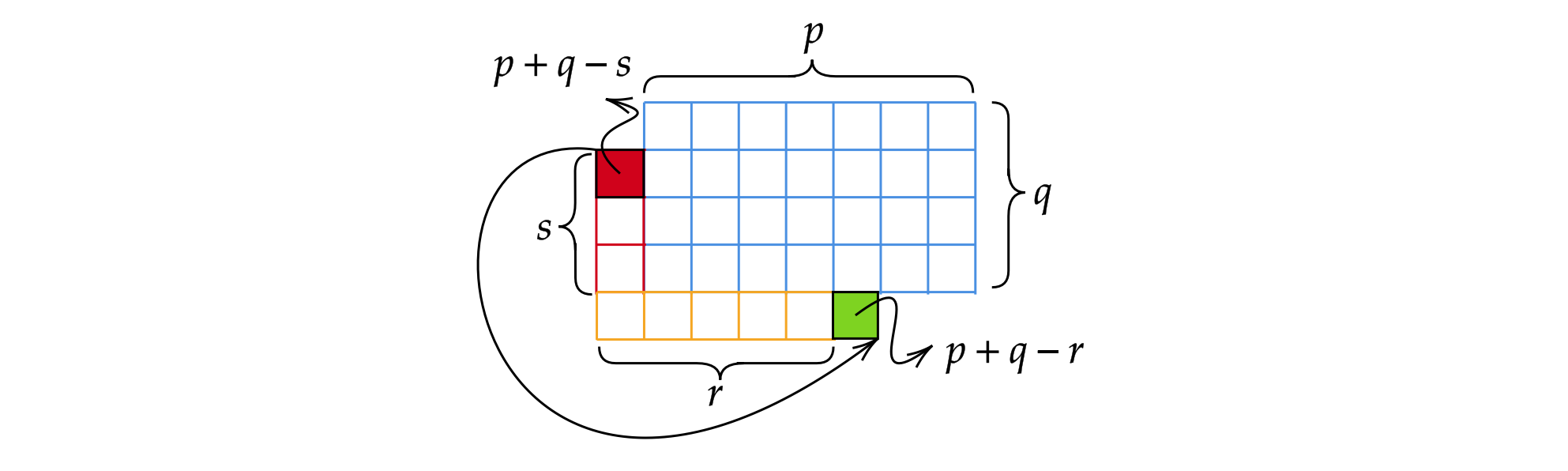}

 \end{center}
 
Now we continue to do this until we cannot continue any longer. There are two cases that can occur depending on the initial parameters $(p, q, r, s)$:

\begin{enumerate}
    \item If $r + s \geq p$, then we get a new shape $(p, q + 1, 0, r + s - p)$. The resulting shape (which has a smaller $(\lambda_1 - 1)$-shifted diagonal than the initial shape) is shown in the next figure. This new shape satisfies the conditions of Lemma \ref{lowb1}.

\begin{center}
    \includegraphics[width=\linewidth]{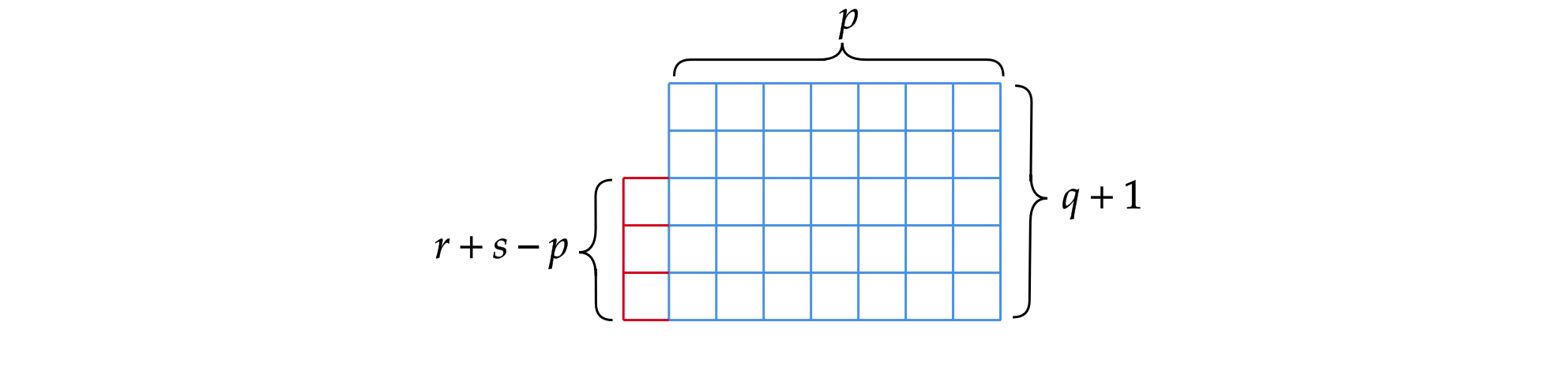}
 \end{center}

\item If $ r+s < p$, then the resulting shape after moving boxes from the first column to the last row is given by the following figure.

\begin{center}
    \includegraphics[width=\linewidth]{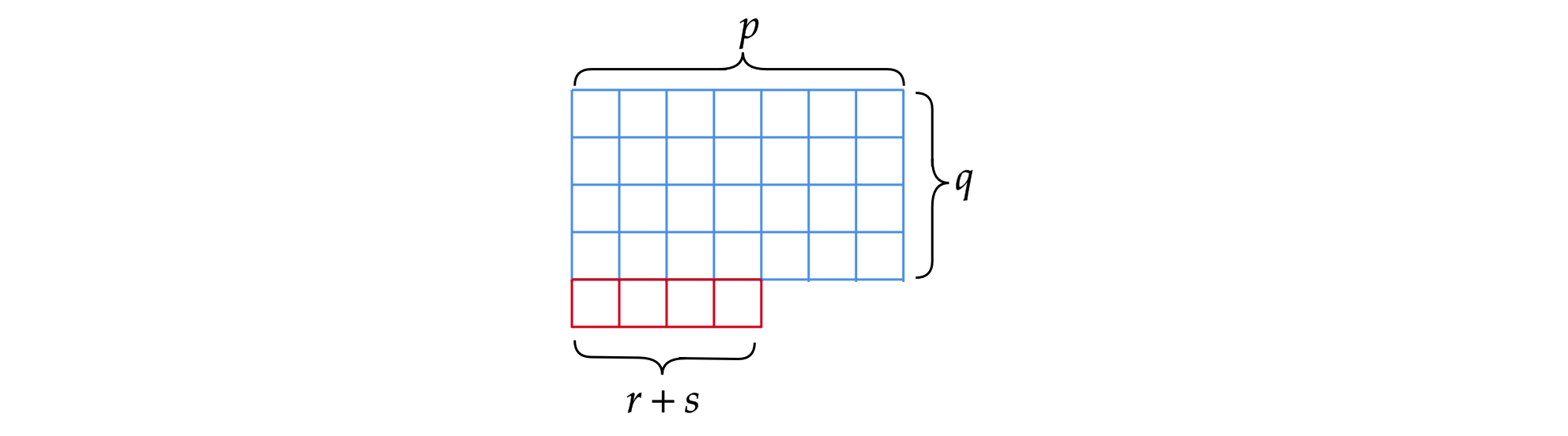}
 \end{center}

\end{enumerate}

But now we can still decrease $A_{\lambda^\downarrow}^k$ by moving boxes from the first column to the last row, because $p - 1 + q - r - s \leq p - 1$.

\begin{center}
    \includegraphics[width=\linewidth]{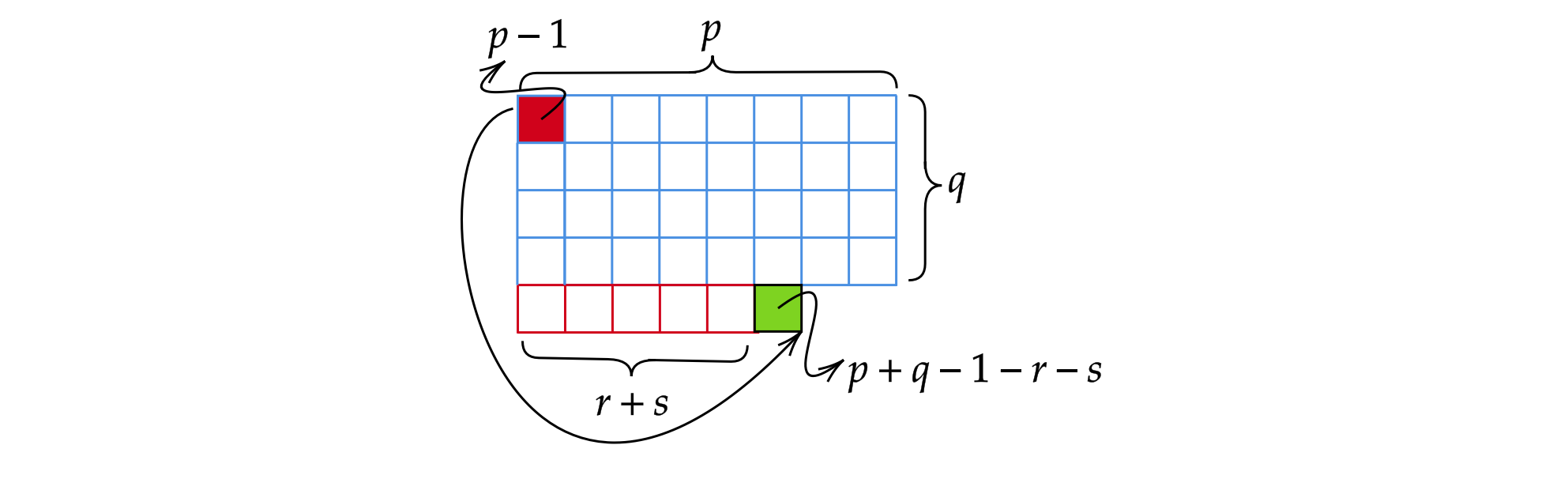}
 \end{center}

We continue this until we fill the $(q + 1)$-th row. 
In this case, we also end up with a new shape $(p, q + 1, 0, r + s - p)$.

\begin{center}
    \includegraphics[width=\linewidth]{diagram-5.png}
 \end{center}
Therefore, in both cases, we end up with the $(p, q + 1, 0, r + s - p)$ shape and we have $r + s - p \leq q^2$, which satisfies the conditions of Lemma \ref{lowb1}, whose $(\lambda_1 - 1)$-shifted diagonal value is smaller than the initial one. 

\end{proof}

We combine the statements of Lemmas \ref{lowb1} and \ref{lowb2} in the following lemma.
\begin{lemma}\label{MINAK}
   Let $\lambda = (\lambda_1, \lambda_2, \dots, \lambda_m)$ be a partition of $n$. Then, $$A_{\lambda^\downarrow}^{k}\geq k+ \binom{k}{2}\frac{\lambda_1-1}{n-1},$$
   for every $1\leq k \leq n$.   
\end{lemma}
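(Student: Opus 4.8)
The plan is to understand the structure of the minimizing configuration. By Definition \ref{shifted}, $A_{\lambda^\downarrow}^{k} = \sum_{n-k < S(i,j) \le n} (\lambda_1 - (j-i))$, and the discussion before the lemma shows that this quantity is minimized precisely by the column-insertion tableau $T^k_{\lambda^\downarrow}$ among tableaux of shape $\lambda$; moreover, as the pictures indicate, pushing a removed box up or to the right decreases its $\lambda_1$-shifted diagonal number, so the $k$ removed boxes should be taken as ``high and far-right'' as the shape permits. Concretely, in $T^k_{\lambda^\downarrow}$ the last $k$ inserted values occupy a set of boxes forming a vertical strip read off column-by-column from the right of $\lambda$; I would first make this shape description precise and then simply sum the shifted diagonal numbers over those $k$ boxes.

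The first step is therefore a reduction: it suffices to prove the bound for $S = T^k_{\lambda^\downarrow}$, i.e. to show
\[
A_{\lambda^\downarrow}^{k} = \sum_{\ell=1}^{k}\bigl(\lambda_1 - d_\ell\bigr) \ge k + \binom{k}{2}\,\frac{\lambda_1-1}{n-1},
\]
where $d_1 \ge d_2 \ge \cdots \ge d_k$ are the diagonal numbers $j-i$ of the $k$ removed boxes, listed in decreasing order of the shifted number's smallness. The largest possible diagonal number of any box of $\lambda$ is $\lambda_1 - 1$ (attained only at box $(1,\lambda_1)$), so each term $\lambda_1 - d_\ell$ is at least $1$, which already gives the ``$+k$'' part. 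The second step is to control how fast the shifted numbers must grow: because at most one box lies on each diagonal $j-i = c$, and removing a box from column $j$ forces, in the column-insertion order, that all boxes of columns to its right down to that row are also among the removed set, the multiset $\{\lambda_1 - d_\ell\}$ must, after subtracting $1$, dominate termwise a configuration whose partial sums are minimized. The natural extremal case is the ``staircase'' shape where the removed boxes sit on consecutive diagonals, giving shifted numbers $1, 2, \dots$ and hence $\sum (\lambda_1 - d_\ell) \ge k + \binom{k}{2}$; but the factor $\frac{\lambda_1-1}{n-1} \le 1$ in the claimed bound is weaker than that, so it must be accounting for the genuinely constrained case where $\lambda$ is wide and short, forcing the removed boxes to spread across many rows rather than many diagonals.

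The third step is thus the honest optimization: minimize $\sum_{\ell=1}^k (\lambda_1 - d_\ell)$ over all valid shapes $\lambda \vdash n$ with $\lambda_1$ fixed, or rather bound it below uniformly using only $n$ and $\lambda_1$. I expect the cleanest route is a convexity/averaging argument: the removed boxes, whatever their positions, have shifted numbers that are distinct within each column and increase by $1$ per row within a column; aggregating over columns and using that the whole diagram has $n$ boxes spread over rows of total length $n$ with first row $\lambda_1$, one gets that the average shifted number of a removed box is at least $1 + \tfrac{k-1}{2}\cdot\tfrac{\lambda_1-1}{n-1}$ — the factor $\tfrac{\lambda_1-1}{n-1}$ being essentially the ``diagonal spread per box'' forced when the shape is packed into $n$ boxes with width $\lambda_1$. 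Multiplying by $k$ yields exactly $k + \binom{k}{2}\tfrac{\lambda_1-1}{n-1}$.

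The main obstacle will be step three: making rigorous the claim that the column-insertion removed boxes cannot all cluster on low shifted-diagonal numbers when $\lambda$ is wide. The pictures in the excerpt suggest the authors argue geometrically (each rightward or upward move decreases the shifted number, so the extremal removed-box set is a specific staircase-like region), and then the bound $\binom{k}{2}\frac{\lambda_1-1}{n-1}$ emerges from comparing that region's diagonal sum against the worst admissible shape. I would handle this by induction on $k$: remove the box with the largest diagonal number among the $k$ removed boxes (it contributes $\lambda_1 - d_1 \ge 1$), apply the inductive bound to the remaining $k-1$ boxes inside the shape $\lambda$ with one box deleted, and verify the arithmetic $ (k-1) + \binom{k-1}{2}\frac{\lambda_1 - 1}{n-2} + 1 \ge k + \binom{k}{2}\frac{\lambda_1-1}{n-1}$, which reduces to checking $\binom{k-1}{2}\frac{1}{n-2} \ge \binom{k}{2}\frac{1}{n-1} - \ldots$; if the naive induction loses too much I would instead argue directly that any removed box in row $i$ has shifted number $\lambda_1 - (j-i) \ge 1 + (i-1)$ combined with a counting bound on how many removed boxes can sit in the top row, which is at most $\lambda_1$, and relate $\lambda_1$ to $n$ via $n \ge \lambda_1$.
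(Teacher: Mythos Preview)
Your proposal has a real gap at step three. The easy parts are fine: the reduction to $T_{\lambda^\downarrow}$ is the setup, and each shifted number being at least $1$ gives the $+k$. But none of your three mechanisms for producing the $\binom{k}{2}\tfrac{\lambda_1-1}{n-1}$ term works. The induction on $k$ fails arithmetically, exactly as you feared: using only that the removed box contributes at least $1$, the inductive step reduces to $\binom{k-1}{2}/(n-2) \ge \binom{k}{2}/(n-1)$, i.e.\ $k \ge 2n-2$, which is false for $k\le n$. The claim that ``at most one box lies on each diagonal $j-i=c$'' is simply wrong --- for instance $(1,c+1)$ and $(2,c+2)$ lie on the same diagonal, and both are among the removed boxes whenever two adjacent rightmost columns of $\lambda$ are removed --- so the staircase-domination idea does not get started. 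And the averaging heuristic, while pointing in the right direction, is not a proof: you never isolate a concrete constraint on $\lambda$ that forces the average shifted number up to $1+\tfrac{k-1}{2}\tfrac{\lambda_1-1}{n-1}$.

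The paper does not induct. It repeatedly moves corner boxes of the removed region up and to the right --- each such move can only lower $A_{\lambda^\downarrow}^k$ --- until the region becomes a canonical near-rectangular shape in the top-right corner, described by parameters $(p,q,r,s)$ with $k=pq+r+s$. The decisive ingredient is then a packing bound: because this shape still sits inside the Young diagram $\lambda$, one has $\lambda_1 q \le n$, hence $\tfrac{1}{q}\ge \tfrac{\lambda_1}{n}\ge \tfrac{\lambda_1-1}{n-1}$. An explicit computation on the canonical shape (plus a further box-moving reduction in the sub-case $r+s>(q-1)^2$) gives $2A_{\lambda^\downarrow}^k \ge 2k + k(k-1)/q$, and substituting the bound on $1/q$ finishes. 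So the factor $\tfrac{\lambda_1-1}{n-1}$ is not a soft ``average diagonal spread''; it comes from the single hard inequality $\lambda_1 q \le n$, which your plan never identifies.
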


We will now use Lemma \ref{MINAK} to provide bounds to the eigenvalues of $P_k$.
\begin{lemma}\label{UPbo}
    Let $\lambda = (\lambda_1, \lambda_2, \dots, \lambda_m) \vdash n$, $S \in SYT(\lambda)$, and $1\leq k \leq n$, then we have
    \begin{enumerate}
             
        \item $ \frac{2-m}{n} \leq \eig(S) \leq \frac{\lambda_1}{n} $, 
        \item $\mid \eig(S) \mid \leq 1-\frac{n-1}{n-\frac{k+1}{2}}\frac{n-\lambda_1}{n}\frac{\lambda_1+1}{n}$ if $ \lambda_1 > \frac{6n}{10}$ or $m > \frac{6n}{10}$.     \end{enumerate}
\end{lemma}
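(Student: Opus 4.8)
The plan is to derive both parts from the eigenvalue formula of Theorem~\ref{T-EVRST} together with the bounds on $D_S^k$ coming from Lemmas~\ref{lem:eig-bounds} and~\ref{MINAK}. For part~(i), I would first observe that by Lemma~\ref{lem:eig-bounds} it suffices to evaluate $\eig$ at the two extremal tableaux $T_{\lambda^{\to}}$ and $T_{\lambda^{\downarrow}}$. For the lower bound, note that every box $(i,j)$ of $\lambda$ satisfies $j - i \geq 1 - m$ (since $i \leq m$ and $j \geq 1$), and in $T_{\lambda^{\to}}$ the $k$ largest labels occupy the bottom-most boxes; a direct estimate of $D_{\lambda^\to}^k$ then gives $D_{\lambda^\to}^k \geq -\,\tfrac{k}{2}(\text{something})$, and feeding this into the formula $\eig = \tfrac1n + \tfrac{2(n-1)}{nk(2n-(k+1))}D_S^k$ yields $\eig(S) \geq \tfrac{2-m}{n}$ after simplification; here one wants the crude bound $D_S^k \ge -\binom{k}{2} - \cdots$ matched against the normalization, and I would check the worst case $k=n$ (random transpositions, $\lambda = (1^n)$) separately as a sanity check since it should give exactly $\tfrac{2-n}{n} = -1 + \tfrac2n$. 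For the upper bound, symmetrically every box has $j - i \leq \lambda_1 - 1$, and $\sum_{(i,j)\in\lambda/\mu}(j-i) \leq (\lambda_1 - 1)k$ is too weak; instead I would use that the $k$-diagonal index is maximized at $T_{\lambda^\downarrow}$ and bound $D_{\lambda^\downarrow}^k \leq$ (sum of the $k$ largest diagonal numbers available in $\lambda$), which after plugging in gives $\eig(S) \leq \lambda_1/n$.

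For part~(ii), the key input is Lemma~\ref{MINAK}: since $A_S^k = k\lambda_1 - D_S^k \geq k + \binom{k}{2}\tfrac{\lambda_1 - 1}{n-1}$, we get $D_S^k \leq k\lambda_1 - k - \binom{k}{2}\tfrac{\lambda_1-1}{n-1} = k(\lambda_1 - 1) - \binom{k}{2}\tfrac{\lambda_1-1}{n-1}$. Substituting this into the eigenvalue formula and simplifying the arithmetic (the factor $k(2n-(k+1)) = 2k(n - \tfrac{k+1}{2})$ in the denominator cancels nicely against $k$ and $\binom{k}{2}$ in the numerator) should produce the claimed bound $\eig(S) \leq 1 - \tfrac{n-1}{n-\frac{k+1}{2}}\tfrac{n-\lambda_1}{n}\tfrac{\lambda_1+1}{n}$. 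This handles the upper bound on $\eig(S)$; for $|\eig(S)|$ I would then argue that the hypothesis $\lambda_1 > \tfrac{6n}{10}$ (resp. $m > \tfrac{6n}{10}$) forces $\eig(S)$ to be far enough from $-1$ that the same quantity dominates $|\eig(S)|$. Concretely, when $\lambda_1$ is large the lower bound from part~(i), $\eig(S) \geq \tfrac{2-m}{n}$, combined with $m \leq n - \lambda_1 + 1 < \tfrac{4n}{10} + 1$, keeps $\eig(S)$ comfortably above $-1 + (\text{the relevant expression})$; the case $m > \tfrac{6n}{10}$ is handled by applying the already-proved bound to the conjugate partition $\lambda'$, using that transposing a tableau negates every diagonal number and hence maps $\eig$ symmetrically, so the roles of $\lambda_1$ and $m$ swap.

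I expect the main obstacle to be part~(ii), and specifically the bookkeeping needed to show that the single expression $1 - \tfrac{n-1}{n-\frac{k+1}{2}}\tfrac{n-\lambda_1}{n}\tfrac{\lambda_1+1}{n}$ simultaneously dominates $\eig(S)$ from above and $-\eig(S)$ from above under the stated regime — one has to verify that the constant $6/10$ is chosen so that neither the $\lambda_1$ nor the $m$ term can push $\eig(S)$ outside the claimed band, and this requires carefully tracking how $m$, $\lambda_1$, and $k$ interact (e.g. $m + \lambda_1 \geq$ something, $k \geq n - |\mu|$, etc.). The conjugation symmetry argument reduces the two hypotheses to one, which is the cleanest way to avoid doing the estimate twice, but one still needs to confirm that $\eig$ genuinely transforms the way I claim under $\lambda \mapsto \lambda'$ — this follows because the shuffle $P_k$ is (up to sign twist by the alternating representation) invariant under conjugation of partitions, a standard fact for symmetric-group random walks supported on transpositions, but it should be stated explicitly.
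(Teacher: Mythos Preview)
Your plan has the right ingredients but misallocates them. When you substitute the bound $D_S^k \le k(\lambda_1-1)-\binom{k}{2}\tfrac{\lambda_1-1}{n-1}$ from Lemma~\ref{MINAK} into the eigenvalue formula and actually carry out the arithmetic, the result is exactly $\eig(S)\le \lambda_1/n$ --- the upper bound of part~(i), \emph{not} part~(ii). (The cancellation you anticipate in the denominator is real, but it collapses the whole thing to $(\lambda_1-1)/n$, not to an expression involving $(\lambda_1+1)/n$.) So Lemma~\ref{MINAK} is the tool for part~(i), and the paper uses it that way; for the lower bound of part~(i) the paper then applies the transpose identity $\eig(S)+\eig(S^T)=2/n$ together with the just-proved upper bound for $S^T$, which immediately gives $(2-m)/n$. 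Your crude bound $j-i\ge 1-m$ is not sharp enough: for $k=n$ it only yields $(3-2m)/n$.

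The genuine gap is part~(ii): the bound there is \emph{not} a consequence of Lemma~\ref{MINAK} alone. For instance, with $k=n$ and $\lambda=(\lambda_1,n-\lambda_1)$ one has $\eig(\lambda)$ equal to $1-\tfrac{n-1}{n-(k+1)/2}\tfrac{n-\lambda_1}{n}\tfrac{\lambda_1+1}{n}$ on the nose, while $\lambda_1/n$ is strictly larger, so part~(ii) cannot be read off from part~(i). The paper obtains part~(ii) by a case split on $k$ versus $\lambda_1-\lambda_2$: if $k\le\lambda_1-\lambda_2$ then the last $k$ labels in $T_{\lambda^\downarrow}$ all sit in the first row and $D^k_{\lambda^\downarrow}$ can be computed exactly, giving the even stronger bound $1-\tfrac{n-1}{n-(k+1)/2}\tfrac{n-\lambda_1}{n}$; if $k>\lambda_1-\lambda_2$ then the box-moving argument underlying Lemma~\ref{MINAK} shows that among shapes with first row $\lambda_1$ the largest eigenvalue is realized by the two-row shape $(\lambda_1,n-\lambda_1)$, and one computes directly for that shape. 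Your conjugation reduction for the hypothesis $m>6n/10$ is correct and is what the paper does, but you need the refined two-case estimate first before it applies.
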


\begin{proof}
For part (i), using Lemma \ref{lem:eig-bounds}, we have
$$\eig\bigl(T_{\lambda^{\to}}\bigr) \leq \eig(S) \leq \eig\bigl(T_{\lambda^{\downarrow}}\bigr).$$
So for the upper bounds of i) and ii), we bound $\mathrm{eig}\bigl(T_{\lambda^{\downarrow}}\bigr)$.
Using Lemma \ref{MINAK}, Definition \ref{shifted}, and the fact that $\eig\bigl(T_{\lambda^{\downarrow}}\bigr) = \frac{1}{n} + \frac{2(n-1)}{nk(2n-(k+1))} D_{\lambda^\downarrow}^k$, we get
$$ \eig\bigl(T_{\lambda^{\downarrow}}\bigr) \leq \frac{1}{n} + \frac{(n-1)}{nk\left(n - \frac{k+1}{2}\right)}\left(k(\lambda_1 - 1) - \binom{k}{2} \frac{\lambda_1 - 1}{n-1}\right) = \frac{\lambda_1}{n}.$$
For the lower bound, let $S^T$ be the transpose of $S$. Since $D^k_S = -D^k_{S^T}$ and by the definition of $\eig(S)$, we have
\begin{equation} \label{tr}
    \eig(S) + \eig(S^T) = \frac{2}{n}.
\end{equation}
The upper bound of part (i) that we just proved says
\begin{equation}\label{m}
    \eig(S^T) \leq \frac{m}{n}.
\end{equation}
Equations \eqref{tr} and \eqref{m} give
$$\frac{2 - m}{n} \leq \eig(S).$$

Now, we prove part $(ii)$ for the case $\lambda_1 > \frac{6n}{10}$, since the case for $m > \frac{6n}{10}$ is similar. If $\lambda_1 > \frac{6n}{10}$, by part (i) we have $$\frac{-n+\lambda_1+1}{n}\leq \frac{2-m}{n} \leq \eig\bigl(T_{\lambda^{\to}}\bigr) \leq \eig(S) \leq  \eig\bigl(T_{\lambda^{\downarrow}}\bigr).$$ Since we have  $$\frac{n-\lambda_1-1}{n} \leq1-\frac{n-1}{n-\frac{k+1}{2}}\frac{n-\lambda_1}{n}\frac{\lambda_1+1}{n},$$ it suffices to only bound $\eig\bigl(T_{\lambda^{\downarrow}}\bigr)$ from above.

Case 1: If $k \leq \lambda_1 - \lambda_2$, then 
\begin{align*}
 \mathrm{eig}\bigl(T_{\lambda^{\downarrow}}\bigr)  &= \frac{1}{n} + \frac{(n-1)}{\left(n - \frac{k+1}{2}\right)} \frac{k(2\lambda_1 - (k+1))}{2nk} \\
&= 1 - \frac{(n-1)}{\left(n - \frac{k+1}{2}\right)} \frac{(n - \lambda_1)}{n} \\
& \leq 1 - \frac{(n-1)}{\left(n - \frac{k+1}{2}\right)} \frac{(n - \lambda_1)}{n} \frac{(\lambda_1 + 1)}{n}.
\end{align*}

Case 2: If $k > \lambda_1 - \lambda_2$, then by using the notation introduced in Lemma \ref{MINAK}, we are in the situation where $q = 1 \leq r$. Therefore, we have $r + s > (q - 1)^2$.

Using similar argument as in Lemma \ref{lowb1} and Lemma \ref{lowb2}, the maximum eigenvalue is attained when $\lambda = (\lambda_1, n - \lambda_1)$.

\begin{center}
    \includegraphics[width=\linewidth]{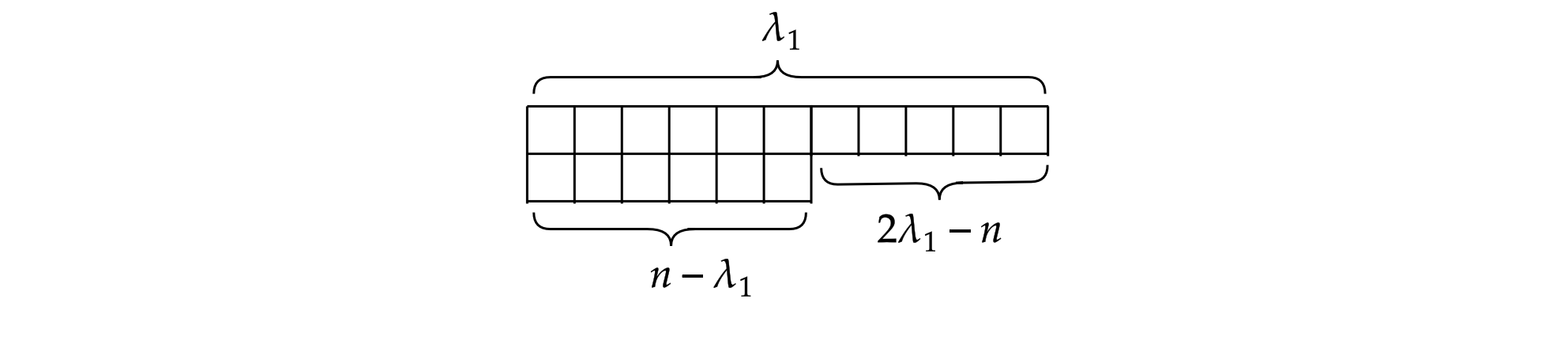}
 \end{center}

    Since $k> \lambda_1-\lambda_2$  we have $\frac{n+k}{2} > \lambda_1 > \frac{6n}{10}$. Therefore,

\begin{align*}
 \mathrm{eig}\bigl(T_{\lambda^{\downarrow}}\bigr) 
&\leq 1-\frac{(n-1)}{(n-\frac{k+1}{2})}\frac{(\lambda_1+1)(n-\lambda_1)-(\frac{n-k}{2})(\frac{n-k+2}{2})}{nk} \\
&\leq 1-\frac{(n-1)}{(n-\frac{k+1}{2})}\frac{(n-\lambda_1)}{n}\frac{(\lambda_1+1)}{n}. 
\end{align*}

\end{proof}

\section{The upper bound}\label{upper}
In this section, we present the analysis of \eqref{l2}. In particular, we provide upper bounds for 
 $$ \Sigma=\sum_{\lambda \neq (n)}d_{\lambda}\sum\limits_{\substack
 {\mu \vdash n-k \\ \mu \subseteq \lambda}}  d_{\mu}d_{\lambda / \mu } \bigg( \frac{1}{n} + \frac{(n-1)}{nk(n-\frac{k+1}{2})} \Big( \textup{Diag}(\lambda)-\textup{Diag}(\mu) \Big) \bigg)^{2t},$$  
when $t=t_{n,k}(c)$.
\begin{proof}[Proof of the upper bound]
We group the partitions of $n$ into the following zones, in order to treat eigenvalues with similar behavior.
\begin{enumerate}
\item[] $$Zone_1 :=\{\lambda : \lambda_1 \leq \frac{n}{3}, m \leq \frac{n}{3}\}$$
\item[] $$Zone_2:=\{\lambda : \frac{n}{3} < \lambda_1 \leq \frac{n}{2} \} \cup \{\lambda : \frac{n}{3} < m \leq \frac{n}{2} \}$$
\item[] $$Zone_3:=\{\lambda : \frac{n}{2} < \lambda_1 \leq \frac{6n}{10} \} \cup \{\lambda : \frac{n}{2} < m \leq \frac{6n}{10} \}$$
\item[] $$Zone_4:=\{\lambda : \frac{6n}{10} < m  \} $$
\item[] $$Zone_5:=\{\lambda : \frac{6n}{10} < \lambda_1 \} .$$
\end{enumerate}
This is summarized in the following picture.

\begin{center}
 
\tikzset{every picture/.style={line width=0.75pt}} 

\begin{tikzpicture}[x=0.75pt,y=0.75pt,yscale=-1,xscale=1]

\draw   (174,44.73) -- (398,239) -- (174,239) -- cycle ;
\draw   (174,135.73) -- (277.27,135.73) -- (277.27,239) -- (174,239) -- cycle ;
\draw   (174,192) -- (221,192) -- (221,239) -- (174,239) -- cycle ;
\draw    (174,89) -- (224,88.4) ;
\draw    (332,182) -- (332,239.4) ;

\draw (140,127.4) node [anchor=north west][inner sep=0.75pt]    {$n/2$};
\draw (140,181.4) node [anchor=north west][inner sep=0.75pt]    {$n/3$};
\draw (210,244.4) node [anchor=north west][inner sep=0.75pt]    {$n/3$};
\draw (265,245.4) node [anchor=north west][inner sep=0.75pt]    {$n/2$};
\draw (103,149.4) node [anchor=north west][inner sep=0.75pt]    {$m$};
\draw (244,285.4) node [anchor=north west][inner sep=0.75pt]    {$\lambda_{1}$};
\draw (139,81.4) node [anchor=north west][inner sep=0.75pt]    {$0.6n$};
\draw (313,246.4) node [anchor=north west][inner sep=0.75pt]    {$0.6n$};
\draw (180,66.4) node [anchor=north west][inner sep=0.75pt]    
{$4$};
\draw (206,108.4) node [anchor=north west][inner sep=0.75pt]    {$3$};
\draw (245,157.4) node [anchor=north west][inner sep=0.75pt]    {$2$};
\draw (193,206.4) node [anchor=north west][inner sep=0.75pt]    {$1$};
\draw (298,194.4) node [anchor=north west][inner sep=0.75pt]    {$3$};
\draw (341,213.4) node [anchor=north west][inner sep=0.75pt]    {$5$};

\end{tikzpicture}

\end{center}

For each zone, we can consider the terms
$$ \Sigma_i=\sum_{\lambda \in Zone_i}d_{\lambda}\sum\limits_{\substack
 {\mu \vdash n-k \\ \mu \subseteq \lambda}}  d_{\mu}d_{\lambda / \mu } \bigg( \frac{1}{n} + \frac{(n-1)}{nk(n-\frac{k+1}{2})} \Big( \textup{Diag}(\lambda)-\textup{Diag}(\mu) \Big) \bigg)^{2t}.$$  

Now, for each zone, we have a bound for the maximum eigenvalue by Lemma \ref{UPbo}. To bound the multiplicities of the eigenvalues, we will use the fact that $d_{\mu} d_{\lambda / \mu} \leq d_{\lambda}$ (which is a consequence of $\sum_{\mu \subseteq \lambda} d_{\lambda} d_{\mu} d_{\lambda / \mu} = d_{\lambda}^2$ and was also proven in \cite{luca}) and Lemmas \ref{L-DUP} and \ref{T-HR}. Zones 1, 2, and 3 are treated just as in inner, mid, and outer-Zone 1~\cite{Diaconis1981}. Namely,

\begin{enumerate}
    \item $\Sigma_1 \leq n! \left(\frac{1}{3}\right)^{2t} \leq b_1 e^{-2c}$,
    \item $\Sigma_2 \leq e^{\pi \sqrt{\frac{2n}{3}}} 4^n \left(\frac{2n}{3}\right)! \left(\frac{1}{2}\right)^{2t} \leq b_2 e^{-2c}$,
    \item $\Sigma_3 \leq e^{\pi \sqrt{\frac{2n}{3}}} 4^n \left(\frac{n}{2}\right)! \left(\frac{6}{10}\right)^{2t} \leq b_3 e^{-2c}$.
\end{enumerate}

For the above cases, the bound does not depend on $k$. Therefore, the fact that $t \geq \frac{1}{2} n (\log(n) + c)$ implies $\Sigma_i \leq Be^{-2c}$, for $i = 1, 2, 3$.

For Zones 4 and 5, we get more intricate bounds. In terms of the multiplicities, the bounds from the outer zone 2 and the outer zone 3~\cite{Diaconis1981} will apply. Equations (3.14) and (3.15)~\cite{Diaconis1981} prove that there exists a constant $b > 0$, universal in $n$, such that
\begin{equation}\label{eub}
e^{-2c}\sum_{j=0}^{0.4n} \frac{p(j)}{j!} e^{\frac{2j^2 \log(n)}{n}} \leq b_4 e^{-2c}.
\end{equation}
For large enough $n$ and  all $2\leq j \leq n$ we have $\frac{\log n}{0.4n} \leq \frac{\log j}{j} 
\Rightarrow 
\frac{2j^2 \log n}{n} \leq 0.8j \log j$.

Therefore, 

\begin{equation}
\sum_{2 \leq j \leq 0.4n} \frac{p(j)}{j!} 
e^{\frac{2j^2 \log n}{n}} 
\leq 
\sum_{j=2}^{\infty} \frac{p(j)}{j!} e^{0.8j \log j} 
< \infty,
\end{equation}

which implies 

\begin{align}
\log \left( \frac{p(j)}{j!} e^{0.8j \log j} \right) 
&= \log p(j) - \log(j!) + 0.8j \log j = -0.2j \log j + O(j).
\end{align}

In Zones 4 and 5, Lemma \ref{UPbo} gives that
$$
\left( \frac{1}{n} + \frac{(n-1)}{nk\left(n - \frac{k+1}{2}\right)} \left( \textup{Diag}(\lambda) - \textup{Diag}(\mu) \right) \right)^{2t} \leq e^{-2c} e^{\frac{2j^2 \log(n)}{n}},
$$
for $t = t_{n,k}(c)$. In total, we get
\begin{enumerate}
    \item[4.] $\Sigma_4 \leq e^{-2c} \sum_{j=0}^{0.4n} \frac{p(j)}{j!} e^{\frac{2j^2 \log(n)}{n}}$,
    \item[5.] $\Sigma_5 \leq e^{-2c} \sum_{j=1}^{0.4n} \frac{p(j)}{j!} e^{\frac{2j^2 \log(n)}{n}}$.
\end{enumerate}
Cases 1-3 and \eqref{eub} give
$$\Sigma \leq e^{-2c}(b_1 + b_2 + b_3 + 2b_4) = a^2 e^{-2c},$$
and this finishes the proof of the upper bound.

\end{proof}

\section{The lower bound}\label{lower}
In this section, we prove the lower bound of Theorem \ref{T-Cutoff} for the set $E=\{n-k+1,\ldots, n\}$. We use the abbreviated notation $P_{id}^t(y)$ instead of $P_k^t(id,y)$ throughout this section.

We use the second moment method, introduced by Diaconis (see, for example, Exercise 13 on page 44~\cite{Diaconis1981}). Let $\chi_{(n-1,1)}$ be the character corresponding to the partition $\lambda = (n-1,1)$. To study how $\chi_{(n-1,1)}$ concentrates with respect to $P_{id}^t$, we will compute $ \text{Var}_{P_{id}^t}(\chi_{(n-1,1)}) $.

Just as in Exercise 13 on page 44~\cite{Diaconis1981}, we have
$$
\chi_{(n-1,1)}^2 = \chi_{(n)} + \chi_{(n-1,1)} + \chi_{(n-2,2)} + \chi_{(n-2,1,1)},
$$
and therefore
$$
\text{Var}_{P_{\text{id}}^t}(\chi_{(n-1,1)}) = \mathbb{E}_{P_{\text{id}}^t}(\chi_{(n)}) + \mathbb{E}_{P_{\text{id}}^t}(\chi_{(n-1,1)}) + \mathbb{E}_{P_{\text{id}}^t}(\chi_{(n-2,2)}) + \mathbb{E}_{P_{\text{id}}^t}(\chi_{(n-2,1,1)}) - \mathbb{E}_{P_{\text{id}}^t}(\chi_{(n-1,1)})^2.
$$

To compute the expectations
$$
\mathbb{E}_{P_{\text{id}}^t}(\chi_{(n)}), \quad \mathbb{E}_{P_{\text{id}}^t}(\chi_{(n-1,1)}), \quad \mathbb{E}_{P_{\text{id}}^t}(\chi_{(n-2,2)}), \quad \mathbb{E}_{P_{\text{id}}^t}(\chi_{(n-2,1,1)}),
$$
we simply need the corresponding eigenvalues. 


\begin{lemma}\label{L-low} 
For the $k$--star transpositions, we have
\begin{align*}
\mathbb{E}_{P_{id}^t}(\chi_{(n)}) &= 1,\\
\mathbb{E}_{P_{id}^t}(\chi_{(n-1,1)}) &= 
    k \left( 1 - \frac{n-1}{n} \cdot \frac{n}{k \left( n - \frac{k+1}{2} \right)} \right)^t 
    + (n-1-k) \left( 1 - \frac{n-1}{n} \cdot \frac{k}{k \left( n - \frac{k+1}{2} \right)} \right)^t, \\
\mathbb{E}_{P_{id}^t}(\chi_{(n-2,2)}) &= 
    \binom{k}{2} \left( 1 - \frac{n-1}{n} \cdot \frac{2(n-1)}{k \left( n - \frac{k+1}{2} \right)} \right)^t 
    + k (n-1-k) \left( 1 - \frac{n-1}{n} \cdot \frac{k+n-2}{k \left( n - \frac{k+1}{2} \right)} \right)^t \\
    &\quad + \frac{(n-k)(n-3-k)}{2} \left( 1 - \frac{n-1}{n} \cdot \frac{2k}{k \left( n - \frac{k+1}{2} \right)} \right)^t, \mbox{ and}\\
\mathbb{E}_{P_{id}^t}(\chi_{(n-2,1,1)}) &= 
    \binom{k}{2} \left( 1 - \frac{n-1}{n} \cdot \frac{2n}{k \left( n - \frac{k+1}{2} \right)} \right)^t 
    + k (n-1-k) \left( 1 - \frac{n-1}{n} \cdot \frac{k+n}{k \left( n - \frac{k+1}{2} \right)} \right)^t \\
    &\quad + \binom{n-1-k}{2} \left( 1 - \frac{n-1}{n} \cdot \frac{2k}{k \left( n - \frac{k+1}{2} \right)} \right)^t .
\end{align*}
\end{lemma}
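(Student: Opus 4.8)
The plan is to use the standard identity relating a character expectation under a random walk to the trace of a Fourier transform. Writing $\mu$ for the step distribution of the $k$--star shuffle, so that $P_k^t(\textup{id},y)=\mu^{*t}(y)$, for any $\lambda\vdash n$ we have
\[
E_{P_{\textup{id}}^t}(\chi_\lambda)=\sum_{y\in S_n}P_k^t(\textup{id},y)\,\chi_\lambda(y)=\textup{tr}\!\left(\widehat{\mu^{*t}}(\rho_\lambda)\right)=\textup{tr}\!\left(\widehat{\mu}(\rho_\lambda)^{\,t}\right).
\]
By the lifting eigenvectors analysis that proves Theorems \ref{T-EVRRT} and \ref{T-EVRST} (the operators $\kappa_a^{\lambda,\mu}$ are injective by Lemma \ref{L-LOI}, and iterating them produces $d_\lambda$ eigenvectors of $\widehat{\mu}(\rho_\lambda)$ indexed by $SYT(\lambda)$), the multiset of eigenvalues of $\widehat{\mu}(\rho_\lambda)$ is exactly $\{\eig(S):S\in SYT(\lambda)\}$. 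Grouping the tableaux of shape $\lambda$ according to the sub-tableau of shape $\mu$ left after deleting the boxes containing $n-k+1,\dots,n$ — as in the proof of Theorem \ref{T-EVRST} — gives
\[
E_{P_{\textup{id}}^t}(\chi_\lambda)=\sum_{\substack{\mu\vdash n-k\\ \mu\subseteq\lambda}} d_{\mu}\,d_{\lambda\setminus\mu}\;\eig(\lambda,\mu)^{\,t}.
\]
So the lemma reduces to, for each of the four shapes $\lambda\in\{(n),(n-1,1),(n-2,2),(n-2,1,1)\}$: enumerating the admissible $\mu\subseteq\lambda$, computing $d_\mu$ and the skew dimension $d_{\lambda\setminus\mu}$, and evaluating $\textup{Diag}(\lambda)-\textup{Diag}(\mu)$ to substitute into the formula of Theorem \ref{T-EVRST}, using $k(2n-(k+1))=2k\left(n-\tfrac{k+1}{2}\right)$ to bring the result to the stated shape.

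For $\lambda=(n)$ there is one tableau, with $\eig=1$, so $E_{P_{\textup{id}}^t}(\chi_{(n)})=1$. For $\lambda=(n-1,1)$ the only $\mu\vdash n-k$ contained in $\lambda$ are $(n-k)$ and $(n-k-1,1)$: in the first case $d_\mu=1$ and $\lambda\setminus\mu$ is a horizontal strip of $k-1$ boxes together with one disjoint box in the second row, so $d_{\lambda\setminus\mu}=k$; in the second $d_\mu=n-k-1$ and $\lambda\setminus\mu$ is a horizontal strip of $k$ boxes, so $d_{\lambda\setminus\mu}=1$. A direct count gives $\textup{Diag}(\lambda)-\textup{Diag}((n-k))=n(k-1)-\binom{k+1}{2}$ and $\textup{Diag}(\lambda)-\textup{Diag}((n-k-1,1))=\tfrac{k(2n-k-3)}{2}$; substituting these into Theorem \ref{T-EVRST} yields exactly the two summands with multiplicities $k$ and $n-1-k$, and $k+(n-1-k)=n-1=d_{(n-1,1)}$ confirms no eigenvalue is missed.

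The shapes $(n-2,2)$ and $(n-2,1,1)$ are treated identically, now with three admissible $\mu$'s each: $(n-k),(n-k-1,1),(n-k-2,2)$ for the first, and $(n-k),(n-k-1,1),(n-k-2,1,1)$ for the second (with the degenerate cases when $k$ is large read off the binomial coefficients). In every case $\lambda\setminus\mu$ is the disjoint union of a horizontal strip with a small piece (a single box, a horizontal domino, or a vertical domino), so $d_{\lambda\setminus\mu}\in\{\binom{k}{2},k,1\}$; combined with $d_{(n-k)}=1$, $d_{(n-k-1,1)}=n-k-1$, $d_{(n-k-2,2)}=\tfrac{(n-k)(n-k-3)}{2}$ and $d_{(n-k-2,1,1)}=\binom{n-k-1}{2}$ this gives the multiplicities $\binom{k}{2}$, $k(n-1-k)$, and $\tfrac{(n-k)(n-3-k)}{2}$ (resp. $\binom{n-1-k}{2}$) in the statement, and one checks $\binom{k}{2}+k(n-1-k)+\tfrac{(n-k)(n-3-k)}{2}=\tfrac{n(n-3)}{2}=d_{(n-2,2)}$ and $\binom{k}{2}+k(n-1-k)+\binom{n-1-k}{2}=\binom{n-1}{2}=d_{(n-2,1,1)}$. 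Evaluating the corresponding $\textup{Diag}(\lambda)-\textup{Diag}(\mu)$ in each case (for instance $\textup{Diag}((n-2,2))-\textup{Diag}((n-k))=n(k-2)-\tfrac{k^2+k-4}{2}$ and $\textup{Diag}((n-2,1,1))-\textup{Diag}((n-k))=n(k-2)-\tfrac{k^2+k}{2}$) and substituting into Theorem \ref{T-EVRST} produces the displayed eigenvalues after the same simplification.

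The computation is entirely routine; the only things requiring care are the bookkeeping of the $\textup{Diag}$ differences and matching each $\mu$ to the correct summand, together with the check that $\sum_\mu d_\mu d_{\lambda\setminus\mu}=d_\lambda$ so that $\widehat{\mu}(\rho_\lambda)$ contributes no missing eigenvalue. A minor secondary point is confirming that boundary values of $k$ do not introduce spurious terms: the non-existence of a shape such as $(n-k-2,2)$ for large $k$ is exactly compensated by the vanishing or negativity of the stated binomial multiplicities, so the formulas remain valid across the full range of $k$ in which the lemma is applied.
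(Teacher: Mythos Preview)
Your proposal is correct and follows essentially the same approach as the paper: both argue via the identity $E_{P_{\textup{id}}^t}(\chi_\lambda)=\textup{tr}\bigl(\widehat{P}(\rho_\lambda)^t\bigr)$ and then read off the eigenvalues of $\widehat{P}(\rho_\lambda)$ (with multiplicities $d_\mu d_{\lambda\setminus\mu}$) from Theorem~\ref{T-EVRST}. The paper states this in one sentence, citing Diaconis for the spectral fact and relying on a figure that tabulates the eigenvalues and multiplicities for the four shapes; your write-up simply makes the enumeration of the admissible $\mu\subseteq\lambda$, the skew dimensions, and the $\textup{Diag}$-differences explicit, and your dimension checks $\sum_\mu d_\mu d_{\lambda\setminus\mu}=d_\lambda$ are a nice sanity check that the paper leaves implicit.
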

The following figure gives the relevant eigenvalues and their multiplicities.
\begin{center}
 \includegraphics[width=420pt]{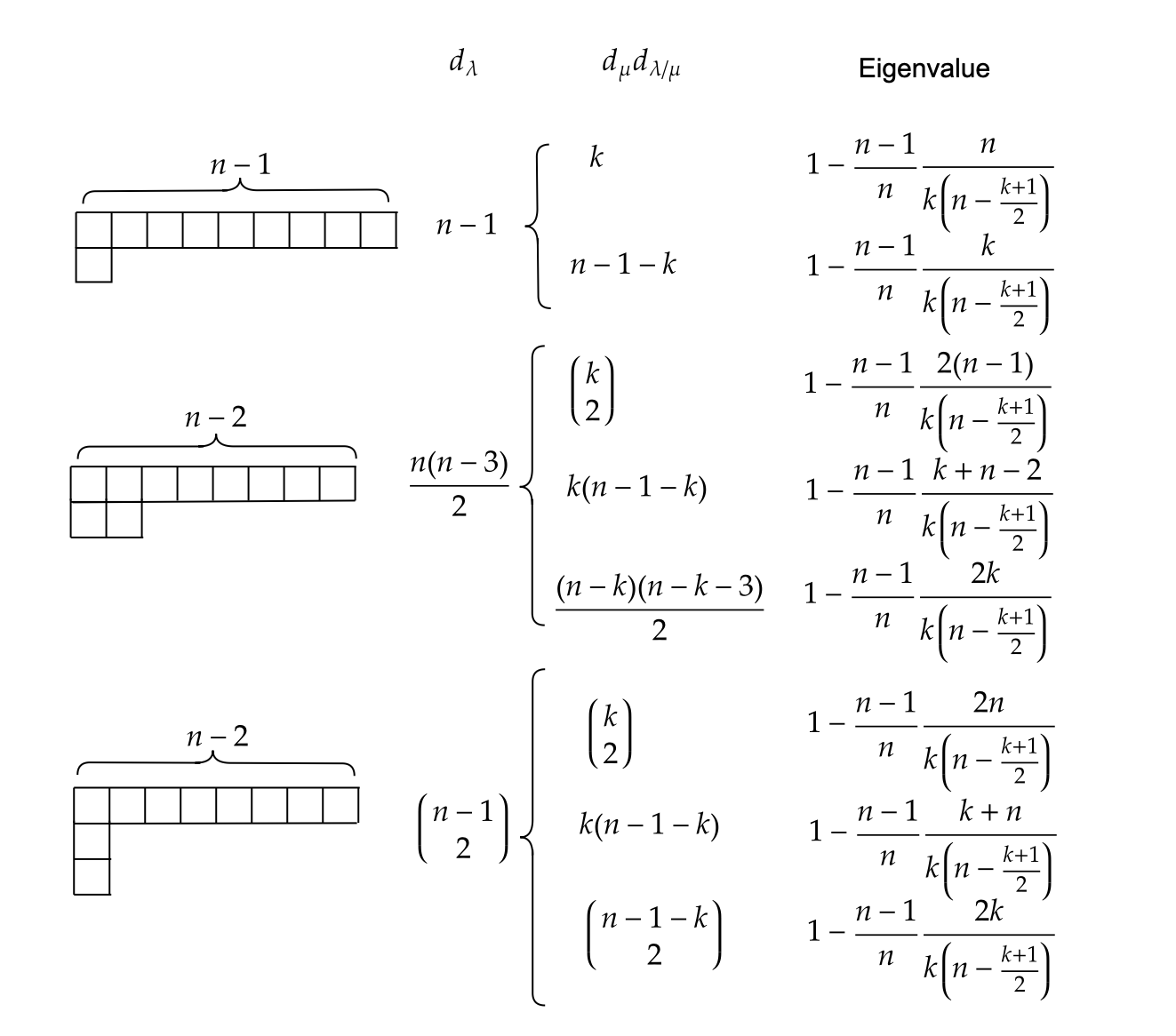}
 \end{center}

\begin{proof}
  These computations follow from the fact that $\mathbb{E}_{P_{id}^t}(\chi_{\lambda})=\operatorname{Tr}(\hat{P}(\rho_{\lambda}))$, where $\lambda$ is a partition of $n$, $\rho_{\lambda}$ is the corresponding irreducible representation, and $$\hat{P}(\rho_{\lambda}):= \sum_{x \in S_n} P(id, x)\rho_{\lambda}(x)$$ is the Fourier transform of $P$ at $\rho$. A standard fact is that the eigenvalues of $\hat{P}(\rho_{\lambda})$ are given exactly by the eigenvalues of $P$ with respect to $\lambda$ (see Theorem 6, Chapter 3E from \cite{Diaconis1988}).
\end{proof}

In the case where $\lim_{n \rightarrow \infty} \frac{k}{n}=0$ or $1$ and $t=\frac{2n-(k+1)}{2(n-1)}n(\log n+c)$, Lemma \ref{L-low} implies 
$$\lim_{n\to \infty}\text{Var}_{P_{id}^{t}(\chi_{(n-1,1)})}=\lim_{n\to \infty}1+\mathbb{E}_{P_{id}^{t}}(\chi_{(n-1,1)})=1+e^{-c}.$$

\begin{proof}[Proof of the lower bound]

Let us consider the set 
$F_{\ell} = \{\, \sigma \in S_n \mid |\chi_{(n-1,1)}(\sigma)| \leq \ell \,\}$
for any $ \ell > 0 $. It is known that $ \chi_{(n-1,1)}(\sigma) = |\textup{fix}(\sigma)| - 1 $, where $ \textup{fix}(\sigma) $ denotes the number of fixed points of the permutation $ \sigma $. Thus, the following inequality holds:
$$
\Vert P_{\text{id}}^{t} - U \Vert \geq |P_{\text{id}}^{t}(F_{\ell}) - U(F_{\ell})|.
$$

Next, consider the estimate for $ P_{\text{id}}^{t}(F_{\ell}) $:

$$
P_{\text{id}}^{t}(F_{\ell}) \leq P_{\text{id}}^{t}\left( |\chi_{(n-1,1)} - \mathbb{E}_{P_{\text{id}}^{t}}(\chi_{(n-1,1)})| \geq \mathbb{E}_{P_{\text{id}}^{t}}(\chi_{(n-1,1)}) - {\ell} \right) \leq \frac{\text{Var}_{P_{\text{id}}^{t}}(\chi_{(n-1,1)})}{(\mathbb{E}_{P_{\text{id}}^{t}}(\chi_{(n-1,1)}) - {\ell})^2}.
$$

Let $d(n,i)$ be the number of permutations in $S_n$ that have exactly $i$ fixed points. For the uniform measure, we can express $ U(F_{\ell}) $ as:

$$
U(F_{\ell}) = \frac{1}{n!} \sum_{i=0}^{\lfloor {\ell} \rfloor + 1} d(n,i) = \frac{1}{n!} \sum_{i=0}^{\lfloor {\ell} \rfloor + 1} \binom{n}{i}!(n-i)
= \frac{1}{n!} \sum_{i=0}^{\lfloor {\ell} \rfloor + 1} \binom{n}{i} \left\lfloor \frac{(n-i)!}{e} + \frac{1}{2} \right\rfloor \geq 1 - \frac{1}{e{\ell}},
$$

for sufficiently large $ n $.

Setting $ {\ell} := \frac{e^{-c}}{2} $, we obtain:

$$
|P_{\text{id}}^{t}(F_{\ell}) - U(F_{\ell})| \geq U(F_{\ell}) - P^{t}(F_{\ell}) \geq 1 - \frac{1}{e{\ell}} - \frac{1 + 2{\ell}}{{\ell}^2}.
$$

If $ c $ is chosen so that $\frac{1}{e{\ell}}+ \frac{1 + 2{\ell}}{{\ell}^2}\leq \varepsilon $, then it follows that:

$$
|P^{t}(F_{\ell}) - U(F_{\ell})| \geq U(F_{\ell}) - P^{t}(F_{\ell}) \geq 1 - \varepsilon.
$$

\end{proof}

\section{The limit profile}\label{lp}

In this section, we present the proof of Theorem \ref{T-LP}. We use Lemma \ref{comp} to compare the limit profile of the $k$--star transpositions with the limit profile of random transpositions. We are able to make this comparison because the transition matrix of random transpositions and the transition matrix of the $k$--star transpositions share the same eigenbasis, as required by Lemma \ref{comp}. This generalizes the known results for random transpositions \cite{Teyssier2019}, which correspond to the identity function $f(n) = n$, and for the star transpositions \cite{nestoridi2024comparing}, which correspond to the constant function $f(n) = 1$.


\begin{lemma}\label{L-SK}
Let $\lambda$ be a partition of $n$ and $j=n-\lambda_1  $. Also, let $\mu$ be a partition of $n-k$ and set $\ell=k-\lambda_1+\mu_1 >0 $. Then we have 

\begin{align*}
       d_{\mu}d_{\lambda / \mu}  \leq  \Big(\frac{4^jk}{n}\Big)^{\ell}   d_{\lambda}.\end{align*}

\end{lemma}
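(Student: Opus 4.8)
The plan is to bound each of the two factors $d_\mu$ and $d_{\lambda\setminus\mu}$ separately, using the hook-length/binomial estimates available for tableaux with a long first row. Recall $j = n-\lambda_1$, so $\lambda$ has at most $j$ boxes outside its first row; and $l = k - \lambda_1 + \mu_1 > 0$ measures how much the skew shape $\lambda\setminus\mu$ sticks out beyond the first column of boxes we are removing. First I would observe that $\mu \vdash n-k$ with $\mu_1 = \lambda_1 - k + l$, so $\mu$ has $n - k - \mu_1 = n - \lambda_1 - l = j - l$ boxes outside its own first row. By Lemma~\ref{L-DUP} applied to $\mu$, we get $d_\mu \le \binom{n-k}{\mu_1}\sqrt{(j-l)!} \le \binom{n-k}{j-l}\sqrt{(j-l)!}$, and since $j - l \le j$ this is crudely at most $n^{j-l}\sqrt{j!}$ or, better, bounded by a quantity comparable to $\binom{n}{j}\sqrt{j!} \ge d_\lambda$ up to the correction factors we are trying to extract.

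**The skew factor.**

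The more delicate term is $d_{\lambda\setminus\mu}$, the number of standard fillings of the skew shape of size $k$. Here I would use the fact that $\lambda\setminus\mu$ lives in a horizontal strip of width $\lambda_1$ together with the $j$ rows below the first, and that a standard skew tableau is determined by choosing which values go in each row subject to column-strictness. A clean bound: $d_{\lambda\setminus\mu} \le \binom{k}{\text{(number of boxes of }\lambda\setminus\mu\text{ not in row }1)} \cdot (\text{small})$, and the number of such boxes is exactly $l$ (the boxes in rows $2,\dots,m$ of $\lambda$ that lie beyond $\mu$), since $\lambda$ and $\mu$ agree outside a region controlled by $l$ and $j$. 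Then $d_{\lambda\setminus\mu}$ is at most the number of ways to interleave: at most $k^l$ for placing the $l$ "low" boxes among $k$ values, times at most $4^{jl}$ or a similar factor absorbing the number of shapes the low boxes can form (a shape inside a $j\times j$ box contributes at most $4^j$ per box via $\binom{2j}{j}\le 4^j$). Combining, $d_\mu d_{\lambda\setminus\mu} \le \sqrt{j!}\cdot n^{j-l}\cdot k^l \cdot 4^{jl} \cdot(\dots)$, and the target $(4^j k/n)^l d_\lambda$ with $d_\lambda \ge \binom{n}{\lambda_1}/\text{poly} \approx n^j/j!$ is what we must land under; the powers of $n$ match since $n^{j-l}\cdot k^l / n^j = (k/n)^l$, and the powers of $4^j$ and the $\sqrt{j!}$ versus $1/j!$ discrepancy need to be checked to cancel or be dominated.

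**The main obstacle and how to handle it.**

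The hard part will be getting the bookkeeping on $d_{\lambda\setminus\mu}$ tight enough that the stray combinatorial factors (the number of intermediate shapes, the $\sqrt{j!}$ from Lemma~\ref{L-DUP} for $\mu$, and the reciprocal $d_\lambda \gtrsim n^j/j!$) collapse into exactly the claimed $(4^j k/n)^l$ with no leftover growing factor. I expect the right move is not to bound $d_\mu$ and $d_{\lambda\setminus\mu}$ in isolation but to use the identity $\sum_{\mu \subseteq \lambda, \mu\vdash n-k} d_\mu d_{\lambda\setminus\mu} = $ (number of SYT of $\lambda$ with a marked set of $k$ final entries) together with $d_\mu d_{\lambda\setminus\mu} \le d_\lambda$ (already noted in the upper-bound section), and then to extract the extra savings $(4^j k/n)^l$ from the constraint that removing the top $k$ entries of a tableau of shape $\lambda$ forces $\mu$ to have first row exactly $\lambda_1 - (k-l)$, which is a rare event when $l$ is large. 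Concretely, I would count: the number of SYT of $\lambda$ in which entries $n-k+1,\dots,n$ occupy a sub-skew-shape with exactly $l$ boxes below row~$1$ is at most $\binom{k}{l}$ (choosing positions within the removed values) times $\binom{j}{l}$-ish (choosing which of the $\le j$ low rows receive them) times $d_\lambda / \binom{\text{something}}{l}$; balancing these ratios against $\binom{n}{\lambda_1}$ produces the $(k/n)^l$ factor, and the $4^{jl}$ is the safety margin for all shape choices of the low part inside a $j$-box region. I would write the argument in that ratio form to keep it short, falling back on the crude Lemma~\ref{L-DUP} bounds only for the factor $\sqrt{j!}$ that does not need to be sharp.
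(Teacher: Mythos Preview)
Your proposal has the right combinatorial picture (that $l$ boxes are removed from rows $\geq 2$ and $k-l$ from row $1$, and that $d_{\lambda\setminus\mu}$ is governed by the $l$ ``low'' boxes), but the core estimate is missing and your first approach would actually fail.

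The paper's proof rests on two clean inequalities. First, the skew bound $d_{\lambda\setminus\mu}\le l!\binom{k}{l}$: choose which $l$ of the $k$ labels go below row~$1$ and arrange them there; the remaining $k-l$ labels fill the horizontal strip in row~$1$ in a unique increasing order. You essentially have this. Second, and this is the key step you are missing, the paper bounds the \emph{ratio} $d_\mu/d_\lambda$ directly by iterating a single-box removal estimate (Lemma~5.3 of \cite{nestoridi2024comparing}): each time a box is removed from a row $\geq 2$ of a partition with $j'\le j$ boxes below row~$1$, the dimension drops by a factor at most $4^{j'}/n'$, giving
\[
d_\mu \le \prod_{i=0}^{l-1}\frac{4^{j-i}}{n-i}\, d_\lambda.
\]
Multiplying by $l!\binom{k}{l}=\prod_{i=0}^{l-1}(k-i)$ yields the claim immediately.

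Your route via Lemma~\ref{L-DUP} cannot close this gap: bounding $d_\mu \le n^{j-l}\sqrt{j!}$ absolutely and then comparing to $d_\lambda$ would require a \emph{lower} bound $d_\lambda\ge \sqrt{j!}\,n^j$, which is false (Lemma~\ref{L-DUP} gives $d_\lambda\le n^j\sqrt{j!}/j!$, so you lose a factor $(j!)^{3/2}$). You sensed this and pivoted to a counting argument, but that sketch is too vague to succeed, and in particular you misattribute the factor $4^{jl}$: it does not come from ``shape choices of the low part'' (there is no such factor in $d_{\lambda\setminus\mu}\le l!\binom{k}{l}$), it comes entirely from the hook-length-driven dimension ratio $d_\mu/d_\lambda$. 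The missing idea is precisely that iterated single-box removal lemma.
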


\begin{proof}
This proof follows the argument in Lemma 5.3 of~\cite{nestoridi2024comparing}, the only difference being the removal of $\ell$ boxes instead of one. 
Specifically, we remove $ \lambda_1 - \mu_1 $ boxes from the first row, which implies that $ \ell $ boxes are removed from rows $ i > 1 $. Therefore,
$$
d_{\mu} \leq \prod_{i=0}^{\ell-1} \left(\frac{4^{j-i}}{n-i}\right) d_{\lambda}.
$$
It is also clear that
$$
d_{\lambda / \mu} \leq \ell! \binom{k}{\ell}.
$$
Hence,
$$
d_{\mu} d_{\lambda / \mu} \leq \prod_{i=0}^{\ell-1} \left(\frac{4^{j-i}}{n-i}\right) \ell! \binom{k}{\ell} d_{\lambda} \leq \left(\prod_{i=0}^{\ell-1} \frac{k - i}{n - i} \right) 4^{j\ell} d_{\lambda} \leq \left( \frac{k 4^{j}}{n} \right)^{\ell} d_{\lambda}.
$$

\end{proof}

\begin{proof}[Proof of Theorem~\ref{T-LP}]
We prove that the right--hand side of the inequalities in Lemma \ref{comp} is equal to zero. Since the limit profile for random transpositions exists, it follows that the limit profile for the $k$--star shuffles exists and is exactly equal to the limit profile of random transpositions. 
We denote $s_{(\lambda, \mu)} :=\eig(\lambda,\mu)$. 

Case 1: $\lim_{n \to \infty}\frac{k}{n}=0$

We adopt the notation of Theorem 1.3 in \cite{nestoridi2024comparing}. Here $\lambda^{'}$ denotes the conjugate partition of $\lambda$, where $\lambda^{'}_1$ equals the number of rows of $\lambda$.
We claim that there exists an $M=M(c,\varepsilon)$ such that 

\begin{enumerate}

    \item $\sum_{\lambda_1, \lambda_1^{'}\leq n-M} d^{2}_{\lambda}\mid s_{\lambda}\mid^{2t_{n,n}} \leq \varepsilon$
    \item $\sum_{\lambda_1, \lambda_1^{'}\leq n-M}d_{\lambda}\sum_{(\lambda,\mu)}d_{\mu}d_{\lambda/\mu}|s_{(\lambda, \mu)}|^{2t_{n,k}} \leq \varepsilon$
    \item $\sum_{\lambda_1 > n-M}d_{\lambda}\sum_{(\lambda,\mu)}d_{\mu}d_{\lambda/\mu}|s_{\lambda}^{t_{n,n}}-s_{(\lambda, \mu)}^{t_{n,k}}|^{2}\leq \varepsilon$
    \item $\sum_{\lambda_1^{'} > n-M}d_{\lambda}\sum_{(\lambda,\mu)}d_{\mu}d_{\lambda/\mu}|s_{\lambda}^{t_{n,n}}-s_{(\lambda, \mu)}^{t_{n,k}}|^{2} \leq \varepsilon$ 
    
\end{enumerate}

for sufficiently large n. 

The first part follows from Lemma 4.1 of Teyssier \cite{Teyssier2019}. Therefore, there exists an $ M_1 = M_1(c, \varepsilon) $ such that the first part holds.

For the second part, by the same argument as in \cite{nestoridi2024comparing}, there exists an $ M_2 = M_2(c, \varepsilon) $ such that
$$
\sum_{j \geq M_2} \frac{e^{-2cj}}{j!} < \varepsilon.
$$
By Lemma \ref{UPbo}, we have
$$
\frac{2 - m}{n} \leq s_{(\lambda, \mu)} \leq \frac{\lambda_1}{n},
$$
and since $ m \leq j + 1 $,
$$
|s_{(\lambda, \mu)}| \leq 1 - \frac{j}{n}.
$$
Because $ \lim_{n \to \infty} \frac{k}{n} = 0 $, for sufficiently large $ n $,
$$
\sum_{\lambda_1, \lambda_1^{\prime} \leq n - M_2} d_{\lambda} \sum_{(\lambda, \mu)} d_{\mu} d_{\lambda / \mu} |s_{(\lambda, \mu)}|^{2t_{n,k}} \leq \sum_{j \geq M_2} d_{\lambda}^2 \left(1 - \frac{j}{n}\right)^{2t_{n,k}} \leq 
\sum_{j \geq M_2} \frac{e^{-2cj}}{j!} \leq \varepsilon.
$$
This concludes part 2.

Now let $ M = \max(M_1, M_2) $. Using parts 1 and 2, along with the inequality $ (a^t - b^t)^2 \leq 2(a^{2t} + b^{2t}) $, we obtain

$$\sum_{\lambda_1, \lambda_1^{'}\leq n-M}d_{\lambda}\sum_{(\lambda,\mu)}d_{\mu}d_{\lambda / \mu}|s_{\lambda}^{t_{n,n}}-s_{(\lambda, \mu)}^{t_{n,k}}|^{2}\leq \varepsilon.$$

We now prove part 3 by dividing the sum into two cases. Part 4 follows by an analogous argument.


\begin{enumerate}
\item[(a)] 

$$\sum_{ \lambda_1 > n-M}d_{\lambda}\sum_{\substack{(\lambda,\mu)\\
                  \lambda_1-\mu_1 <k}}d_{\mu}d_{\lambda / \mu}|s_{\lambda}^{t_{n,n}}-s_{(\lambda, \mu)}^{t_{n,k}}|^{2} $$

\item[(b)] 

$$\sum_{\lambda_1 > n-M}d_{\lambda}\sum_{\substack{(\lambda,\mu)\\
                  \lambda_1-\mu_1 =k}}d_{\mu}d_{\lambda / \mu}|s_{\lambda}^{t_{n,n}}-s_{(\lambda, \mu)}^{t_{n,k}}|^{2}. $$

\end{enumerate}

For (a), applying Lemma \ref{UPbo} for sufficiently large $ n $,
$$
|s_{\lambda}|^{t_{n,n}} \leq 3\frac{e^{-cj}}{n^j} \quad \text{and} \quad |s_{(\lambda,\mu)}|^{t_{n,k}} \leq \left(1-\frac{j}{n}\right)^{t_{n,k}} \leq \frac{e^{-cj}}{n^j}.
$$
Using Lemma \ref{L-SK}, along with the fact that
$$
\lim_{n \to \infty}\frac{k}{n}=0 \quad \Rightarrow \quad \lim_{n \to \infty}\left(\frac{4^M k}{n}\right)^{\ell}=0 \quad \text{for any } \ell \geq 1,
$$
it follows that
$$
\sum_{ \lambda_1 > n-M}\sum_{\substack{(\lambda,\mu)\\
\lambda_1 - \mu_1 < k}} d_{\lambda} d_{\mu} d_{\lambda / \mu} |s_{\lambda}^{t_{n,n}} - s_{(\lambda, \mu)}^{t_{n,k}}|^{2} \leq \sum_{\ell=1}^{\min(k,M)} M! \binom{M}{\ell} \left(\frac{4^M k}{n}\right)^{\ell} 4^2 \sum_{j<M} \frac{e^{-2cj}}{j!} \leq \varepsilon.
$$

For part (b), note that
$$
s_{(\lambda,\mu)}^{t_{n,k}} = \left(1 - \frac{n-1}{n - \frac{k+1}{2}} \frac{j}{n}\right)^{t_{n,k}} = \frac{e^{-cj}}{n^j} \left(1 + O\left(\frac{j^2}{n}\right)\right),
$$
and
$$
s_{\lambda}^{t_{n,n}} = \frac{e^{-cj}}{n^j} \left(1 + O\left(\frac{\log(n)}{n}\right)\right).
$$
Then, by Lemma \ref{L-DUP},
$$
\sum_{\lambda_1 > n-M} d_{\lambda} \sum_{\substack{(\lambda,\mu)\\
\lambda_1 - \mu_1 = k}} d_{\mu} d_{\lambda / \mu} |s_{\lambda}^{t_{n,n}} - s_{(\lambda, \mu)}^{t_{n,k}}|^{2} \leq \varepsilon,
$$
which completes the proof in the case $ \lim_{n \to \infty} \frac{k}{n} = 0 $.

Case 2: $\lim_{n \to \infty}\frac{k}{n}=1$ 

In this case, we have 
$$s_{(\lambda,\mu)}=1-\frac{n-1}{n-\frac{k+1}{2}}\frac{j}{n}+O\left(\frac{1}{n^2}\right),$$
which implies   \/
$$s_{(\lambda,\mu)}^{t_{n,k}}=\frac{e^{-cj}}{n^j}\left(1+O\Big(\frac{\log(n)}{n}\Big)\right).$$
Consequently, the difference satisfies 
$$\mid s_{\lambda}^{t_{n,n}}-s_{(\lambda, \mu)}^{t_{n,k}}\mid=\frac{e^{-cj}}{n^j}O\left(\frac{\log(n)}{n}\right).$$
Therefore, we obtain $$\sum_{ \lambda_1 > n-M}d_{\lambda}\sum_{\substack{(\lambda,\mu)}}d_{\mu}d_{\lambda / \mu}|s_{\lambda}^{t_{n,n}}-s_{(\lambda, \mu)}^{t_{n,k}}|^{2}=O\left(\frac{\log^2(n)}{n^2}\right),$$
which is sufficient to complete the proof of Case 2.
\end{proof}







\printbibliography

@article {NP,
AUTHOR = {Nestoridi, Evita and Peng, Kenneth},
  TITLE = {Mixing times of one-sided k-transposition shuffles},
 JOURNAL = {submitted},
YEAR = {2025},
}

@article {JJ,
    AUTHOR = {Hermon, Jonathan and Salez, Justin},
     TITLE = {The interchange process on high-dimensional products},
   JOURNAL = {Ann. Appl. Probab.},
  FJOURNAL = {The Annals of Applied Probability},
    VOLUME = {31},
      YEAR = {2021},
    NUMBER = {1},
     PAGES = {84--98},
      ISSN = {1050-5164,2168-8737},
   MRCLASS = {60J10 (60B15 60J27 60K35)},
  MRNUMBER = {4254474},
       }

@article {Subag,
    AUTHOR = {Subag, Eliran},
     TITLE = {A lower bound for the mixing time of the random-to-random
              insertions shuffle},
   JOURNAL = {Electron. J. Probab.},
  FJOURNAL = {Electronic Journal of Probability},
    VOLUME = {18},
      YEAR = {2013},
     PAGES = {no. 20, 20},
      ISSN = {1083-6489},
   MRCLASS = {60J10 (60C05)},
  MRNUMBER = {3035748},
MRREVIEWER = {Samy\ Abbes},
      
       
}

@misc{BCGS,
      title={The $q$-deformed random-to-random family in the Hecke algebra}, 
      author={Sarah Brauner and Patricia Commins and Darij Grinberg and Franco Saliola},
      year={2025},
      archivePrefix={arXiv},
      primaryClass={math.CO}
     
}

@article {EZLT,
    AUTHOR = {Even-Zohar, Chaim and Lakrec, Tsviqa and Tessler, Ran J.},
     TITLE = {Spectral analysis of word statistics},
   JOURNAL = {S\'em. Lothar. Combin.},
  FJOURNAL = {S\'eminaire Lotharingien de Combinatoire},
    VOLUME = {85B},
      YEAR = {2021},
     PAGES = {Art. 81, 12},
      ISSN = {1286-4889},
   MRCLASS = {05A05 (05A16)},
  MRNUMBER = {4311962},
}

@article {GL,
    AUTHOR = {Grinberg, Darij and Lafreni\`ere, Nadia},
     TITLE = {The one-sided cycle shuffles in the symmetric group algebra},
   JOURNAL = {Algebr. Comb.},
  FJOURNAL = {Algebraic Combinatorics},
    VOLUME = {7},
      YEAR = {2024},
    NUMBER = {2},
     PAGES = {275--326},
      ISSN = {2589-5486},
   MRCLASS = {05E16 (05E10 20B05)},
  MRNUMBER = {4741918},
       
       
}

@article {FOW,
    AUTHOR = {Flatto, L. and Odlyzko, A. M. and Wales, D. B.},
     TITLE = {Random shuffles and group representations},
   JOURNAL = {Ann. Probab.},
  FJOURNAL = {The Annals of Probability},
    VOLUME = {13},
      YEAR = {1985},
    NUMBER = {1},
     PAGES = {154--178},
      ISSN = {0091-1798,2168-894X},
   MRCLASS = {60J15 (20P05 60B15)},
  MRNUMBER = {770635},
MRREVIEWER = {Wolfgang\ Woess},
      
}

@article {BD,
    AUTHOR = {Berestycki, Nathana\"el and Durrett, Rick},
     TITLE = {A phase transition in the random transposition random walk},
   JOURNAL = {Probab. Theory Related Fields},
  FJOURNAL = {Probability Theory and Related Fields},
    VOLUME = {136},
      YEAR = {2006},
    NUMBER = {2},
     PAGES = {203--233},
      ISSN = {0178-8051,1432-2064},
   MRCLASS = {60C05 (60G50)},
  MRNUMBER = {2240787},
MRREVIEWER = {David\ J.\ Aldous},
       
}

@article {AC,
    AUTHOR = {Caputo, Pietro and Liggett, Thomas M. and Richthammer, Thomas},
     TITLE = {Proof of {A}ldous' spectral gap conjecture},
   JOURNAL = {J. Amer. Math. Soc.},
  FJOURNAL = {Journal of the American Mathematical Society},
    VOLUME = {23},
      YEAR = {2010},
    NUMBER = {3},
     PAGES = {831--851},
      ISSN = {0894-0347,1088-6834},
   MRCLASS = {60K35 (05C50 15A18 15A63 37A30 60J27)},
  MRNUMBER = {2629990},
MRREVIEWER = {Laurent\ Miclo},
      
}

@article {Lacoin,
    AUTHOR = {Lacoin, Hubert},
     TITLE = {Mixing time and cutoff for the adjacent transposition shuffle
              and the simple exclusion},
   JOURNAL = {Ann. Probab.},
  FJOURNAL = {The Annals of Probability},
    VOLUME = {44},
      YEAR = {2016},
    NUMBER = {2},
     PAGES = {1426--1487},
      ISSN = {0091-1798,2168-894X},
   MRCLASS = {60J10 (37L60 60K35 82C20)},
  MRNUMBER = {3474475},
MRREVIEWER = {G\'erard\ Letac},
      
       
}

@article {NN,
    AUTHOR = {Nestoridi, Evita and Nguyen, Oanh},
     TITLE = {The full spectrum of random walks on complete finite {$d$}-ary
              trees},
   JOURNAL = {Electron. J. Probab.},
  FJOURNAL = {Electronic Journal of Probability},
    VOLUME = {26},
      YEAR = {2021},
     PAGES = {Paper No. 43, 17},
      ISSN = {1083-6489},
   MRCLASS = {60C05},
  MRNUMBER = {4242229},
MRREVIEWER = {Alexander\ V.\ Marynych},
       
}

@incollection {PFlour,
    AUTHOR = {Diaconis, Persi},
     TITLE = {Applications of noncommutative {F}ourier analysis to
              probability problems},
 BOOKTITLE = {\'{E}cole d'\'{E}t\'{e} de {P}robabilit\'{e}s de {S}aint-{F}lour {XV}--{XVII},
              1985--87},
    SERIES = {Lecture Notes in Math.},
    VOLUME = {1362},
     PAGES = {51--100},
 PUBLISHER = {Springer, Berlin},
      YEAR = {1988},
   MRCLASS = {60B15 (60C05 60J10)},
  MRNUMBER = {983372},
MRREVIEWER = {David J. Aldous},

}

@book{E-S,
  title={The symmetric group: representations, combinatorial algorithms, and sym-
metric functions},
  author={Bruce E. Sagan},
  year={2013},
  publisher={Springer Science and Business Media},
  keywords = {Mathematis}
}

@phdthesis{OMR,
    author    = "Oliver Matheau-Raven",
    title     = "Random Walks on the Symmetric Group: Cutoff for One-sided Trans-
position Shuffles",
    year      = "2020",
   school ="University of York"
}

@article {Diaconis1981,
    AUTHOR = {Diaconis, Persi and Shahshahani, Mehrdad},
     TITLE = {Generating a random permutation with random transpositions},
   JOURNAL = {Z. Wahrsch. Verw. Gebiete},
  FJOURNAL = {Zeitschrift f\"ur Wahrscheinlichkeitstheorie und Verwandte
              Gebiete},
    VOLUME = {57},
      YEAR = {1981},
    NUMBER = {2},
     PAGES = {159--179},
      ISSN = {0044-3719},
   MRCLASS = {60C05 (60J15)},
  MRNUMBER = {626813},
MRREVIEWER = {Lars\ Holst},
     
}

@book {Diaconis1988,
    AUTHOR = {Diaconis, Persi},
     TITLE = {Group representations in probability and statistics},
    SERIES = {Institute of Mathematical Statistics Lecture Notes---Monograph
              Series},
    VOLUME = {11},
 PUBLISHER = {Institute of Mathematical Statistics, Hayward, CA},
      YEAR = {1988},
     PAGES = {vi+198},
      ISBN = {0-940600-14-5},
   MRCLASS = {60-02 (20C99 62-02)},
  MRNUMBER = {964069},
MRREVIEWER = {Philippe\ Bougerol},
}

@book {LPW-MCMT,
    AUTHOR = {Levin, David A. and Peres, Yuval},
     TITLE = {Markov chains and mixing times},
   EDITION = {Second},
      NOTE = {With contributions by Elizabeth L. Wilmer,
              With a chapter on ``Coupling from the past'' by James G. Propp
              and David B. Wilson},
 PUBLISHER = {American Mathematical Society, Providence, RI},
      YEAR = {2017},
     PAGES = {xvi+447},
      ISBN = {978-1-4704-2962-1},
   MRCLASS = {60J10 (60-01 60B15 60C05 60J27 60K35 68U20 82C22)},
  MRNUMBER = {3726904},
       }

@incollection {Hardy2000,
    AUTHOR = {Hardy, G. H. and Ramanujan, S.},
     TITLE = {Asymptotic formul\ae{} in combinatory analysis [{P}roc.
              {L}ondon {M}ath. {S}oc. (2) {\bf 17} (1918), 75--115]},
 BOOKTITLE = {Collected papers of {S}rinivasa {R}amanujan},
     PAGES = {276--309},
 PUBLISHER = {AMS Chelsea Publ., Providence, RI},
      YEAR = {2000},
      ISBN = {0-8218-2076-1},
   MRCLASS = {01A75},
  MRNUMBER = {2280879},
}

@article {Teyssier2019,
    AUTHOR = {Teyssier, Lucas},
     TITLE = {Limit profile for random transpositions},
   JOURNAL = {Ann. Probab.},
  FJOURNAL = {The Annals of Probability},
    VOLUME = {48},
      YEAR = {2020},
    NUMBER = {5},
     PAGES = {2323--2343},
      ISSN = {0091-1798,2168-894X},
   MRCLASS = {60J10 (20C30 60B15)},
  MRNUMBER = {4152644},
MRREVIEWER = {Martin\ V.\ Hildebrand},
       
}

@article {Dieker2018,
    AUTHOR = {Dieker, A. B. and Saliola, F. V.},
     TITLE = {Spectral analysis of random-to-random {M}arkov chains},
   JOURNAL = {Adv. Math.},
  FJOURNAL = {Advances in Mathematics},
    VOLUME = {323},
      YEAR = {2018},
     PAGES = {427--485},
      ISSN = {0001-8708,1090-2082},
   MRCLASS = {60J10 (60B15)},
  MRNUMBER = {3725883},
MRREVIEWER = {Ariel\ Yadin},
      
}

@article{10.1214/20-AAP1632,
author = {Michael E. Bate and Stephen B. Connor and Oliver Matheau-Raven},
title = {{Cutoff for a one-sided transposition shuffle}},
volume = {31},
journal = {The Annals of Applied Probability},
number = {4},
publisher = {Institute of Mathematical Statistics},
pages = {1746 -- 1773},
keywords = {coupon collecting, Cutoff phenomenon, mixing time, representation theory, Young tableaux},
year = {2021},

}

@article{nestoridi2024comparing,
  title={Comparing limit profiles of reversible Markov chains},
  author={Nestoridi, Evita},
  journal={Electronic Journal of Probability},
  volume={29},
  pages={1--14},
  year={2024},
  publisher={The Institute of Mathematical Statistics and the Bernoulli Society}
}

@article{bernstein2019cutoff,
  title={Cutoff for random to random card shuffle},
  author={Bernstein, Megan and Nestoridi, Evita},
  journal={The Annals of Probability},
  volume={47},
  number={5},
  pages={3303--3320},
  year={2019},
  publisher={JSTOR}
}

@article{article,
author = {Teyssier, Lucas},
year = {2020},
month = {09},
pages = {2323-2343},
title = {Limit profile for random transpositions},
volume = {48},
journal = {Annals of Probability},
doi = {10.1214/20-AOP1424}
}

@book{M,
  title={"Markov Chains and Mixing Times"},
  author={  David A. Levin and 
 Yuval Peres and Elizabeth L. Wilmer
},
  year={2008},
  publisher={AMS},
  keywords = {Mathematis}
}

@article{axelrod2024spectrum,
  title={Spectrum of random-to-random shuffling in the Hecke algebra},
  author={Axelrod-Freed, Ilani and Brauner, Sarah and Chiang, Judy Hsin-Hui and Commins, Patricia and Lang, Veronica},
  journal={arXiv preprint arXiv:2407.08644},
  year={2024}
}

@article{luca,
  title={Sharp character bounds and
cutoff profiles for symmetric groups},
  author={Olesker-Taylor, Sam and Teyssier,Lucas  and Thévenin, Paul },
  journal={arXiv preprint arXiv:2503.12735},
  year={2025}
}

@article{Mathas, 
author={Mathas, Andrew},
title={Iwahori--{H}ecke Algebras and {S}chur Algebras of the Symmetric Group}, 
journal={AMS University Lecture Series}, 
volume={15},
year={1999}
}

@article{ghosh2019total,
  title={Total variation cutoff for the flip-transpose top with random shuffle},
  author={Ghosh, Subhajit},
  journal={arXiv preprint arXiv:1906.11544},
  year={2019}
}

@article{MURPHY1992492,
title = {On the representation theory of the symmetric groups and associated {H}ecke algebras},
journal = {Journal of Algebra},
volume = {152},
number = {2},
pages = {492-513},
year = {1992},
issn = {0021-8693},
%doi = {https://doi.org/10.1016/0021-8693(92)90045-N},
%url = {https://www.sciencedirect.com/science/article/pii/002186939290045N},
author = {G.E. Murphy}
}

\end{document}